\title[Boundary Regularity of Harmonic maps from $RCD(K,N)$-space to $CAT(0)$-space ]{Boundary Regularity of Harmonic maps from $RCD(K,N)$-space to $CAT(0)$-space}
\author{Hui-Chun Zhang}
\address{Department of Mathematics\\  Sun Yat-sen University\\ Guangzhou 510275\\ \newline E-mail address: zhanghc3@mail.sysu.edu.cn}
\author{Xi-Ping Zhu}
\address{Department of Mathematics\\  Sun Yat-sen University\\ Guangzhou 510275\\ \newline E-mail address: stszxp@mail.sysu.edu.cn}
 \newtheorem{theorem}{Theorem}[section]
\newtheorem{lemma}[theorem]{Lemma}
\newtheorem{corollary}[theorem]{Corollary}
\theoremstyle{definition}
\theoremstyle{remark}
\newtheorem{exam}[theorem]{Example}
\newtheorem{defn}[theorem]{Definition}
\newtheorem{remark}[theorem]{Remark}
\numberwithin{equation}{section}
\newcommand{\ls}{\leqslant}
\newcommand{\gs}{\geqslant}
\newcommand{\ip}[2]{\left<{#1},{#2}\right>}
\newcommand{\meas}{\mathfrak{m}}
\begin{document}


\begin{abstract} 
We establish the boundary regularity of harmonic maps from $RCD(K, N)$ metric measure spaces into $CAT(0)$ metric spaces.  
\end{abstract}

\maketitle

\section{Introduction}

From M. Gromov and R. Schoen \cite{GS92},  there has been growing interest in developing a theory of harmonic maps between singular metric spaces. Korevaar-Schoen \cite{KS93} and independently Jost \cite{Jost94,Jost95,Jost97}  established a general theory of Sobolev and harmonic maps with values into metric spaces with non-positive curvature in the sense of Alexandrov.  Several notions of Sobolev energy for maps and harmonic maps into/between metric targets have been introduced in \cite{KS93,Jost94,HKST01,KS03,Ohta04}. In \cite{KS93}, Korevaar and Schoen established a theory of Sobolev spaces for maps into a metric space, the source spaces are smooth manifolds. Kuwae and Shioya \cite{KS03}  extended Korevaar-Schoen's theory to the case where the source spaces are a class of singular metric spaces. Recently Gigli and Tyulenev \cite{GT21b} extended it to the case where the source spaces are $RCD(K,N)$ spaces, a class of metric measure spaces with lower bounds on the Ricci curvature in the synthetic sense. Nowadays, the theory of $RCD$-spaces develops extensively (see, for example, \cite{Stu06a,Stu06b,LV09,Gig13,AGS14a,AGS14b,EKS15,AMS16,CM21}, and so on). 
The readers can refer to the surveys  \cite{Amb18,Gig23b} (and references therein) for more about the history of
the topic.

 Let $\Omega$ be a bounded domain in an $RCD(K,N)$ space $(X,d,\meas)$ for some $K\in\mathbb R$ and $N\in [1,\infty)$, and let $(Y, d_Y)$ be a complete metric space. A map $u:\Omega\to Y$ is called an $L^2(\Omega,Y)$-map if for some $P\in Y$ the function $d_Y(P,u(x))$ is in $L^2(\Omega)$.
\begin{defn}\label{def1.1} Suppose that $\meas(X\setminus\Omega)>0$. 
 Given a map $u\in L^2(\Omega,Y),$ for each $r>0$, the \emph{approximating energy } $E^{\Omega}_{2, r}[u]$ at scale $r$  is defined as a functional on $C_0(\Omega),$ 
 $$ E^{\Omega}_{2,r}[u](\phi):=\int_\Omega\phi\cdot {\rm ks}^2_{2,r}[u,\Omega]d\meas,$$ where
  \begin{equation}\label{equ-1.1}
 {\rm ks}_{2,r}[u,\Omega](x):=
 \begin{cases}
 \left(\fint_{B_r(x)}\frac{d_Y^2(u(x),u(y))}{r^2}d\meas(y)\right)^{1/2}& {\rm if}\ B_r(x)\subset \Omega,\\
 0&{\rm otherwise}.
 \end{cases}
 \end{equation}
A map $u\in L^2(\Omega,Y) $ is called in $W^{1,2}(\Omega,Y)$ if 
$$E_2^{\Omega}(u):=\sup_{\phi\in C_0(\Omega),\ 0\ls \phi\ls 1}\Big(\limsup_{r\to 0} E^{\Omega}_{2,r}[u](\phi)\Big)<+\infty.$$
\end{defn}

  We consider the Dirichlet problem as follows. Given any $w\in W^{1,2}(\Omega,Y)$, we want to find the minimizer of 
  $$\min_{u\in W^{1,2}_{w}(\Omega, Y)} E_2^{\Omega}(u),$$
  where 
\begin{equation}
\label{equ-1.2}
W^{1,2}_{w}(\Omega,Y):=\big\{u\in W^{1,2}(\Omega,Y):\ d_Y\big(u(x),w(x)\big)\in W^{1,2}_0(\Omega)\big\}.
\end{equation}
Such an energy-minimizing map is called a \emph{harmonic map}.

When the target space is a $CAT(0)$ space, the existence and uniqueness of harmonic maps have been established in \cite{KS93,Jost97,GT21b}. In \cite{KS93}, Korevaar-Schoen obtained the interior  Lipschitz continuity for
  such harmonic maps under the assumption that the source spaces are smooth manifolds. There have been a lots of subsequent researches for the interior regularity of harmonic maps in metric settings (see, for example, \cite{Che95, Lin97,Jost97,DM08, DM10, EF01, Fug03,Fug08,Stu05,HZ17,Guo21,ZZ18,ZZZ19}).   In \cite{Lin97} and \cite{Jost97}, F. H. Lin and J. Jost proved the interior H\"older continuity  of harmonic maps from a domain of
   Alexandrov space with curvature bounded below into a $CAT(0)$ space\footnote{In \cite{Jost97}, the source spaces are locally compact metric spaces with a Dirichlet form and a uniform  Poincar\'e inequality.}.
   They conjectured that these harmonic maps should be interior Lipschitz continuous. This has been settled by the authors in \cite{ZZ18}. Recently, Mondino and Semola \cite{MS23}, and Gigli \cite{Gig23}, independently, proved the interior Lipschitz continuity of harmonic maps from
$RCD(K,N)$ spaces into $CAT(0)$ spaces.
\begin{theorem}[Mondino-Semola, Gigli]\label{thm-1.2}
 Let $\Omega\subset X$ be a bounded domain of an $RCD(K,N)$ metric measure
space $(X,d,\meas)$  for some $K\in \mathbb R$ and $N\in [1,\infty)$ with $\meas(X\setminus \Omega)>0$, and let $(Y,d_Y)$ be a $CAT(0)$ space.  Assume that $u:\Omega\to Y$ is a harmonic map. Then $u$ is  Lipschitz continuous in the interior of $\Omega$. 
\end{theorem}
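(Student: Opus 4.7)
The goal is to upgrade the $L^2$ finite-energy bound $E_2^\Omega(u)<\infty$ to a local $L^\infty$ control on the Lipschitz constant of $u$. The strategy, in the spirit of Jost and Korevaar-Schoen but carried out in the $RCD$ setting, combines two ingredients: the $CAT(0)$ hypothesis on $Y$, which makes $d_Y^2(\cdot,Q)$ uniformly $2$-convex for every $Q\in Y$; and the Bakry-Émery calculus on $RCD(K,N)$ spaces, together with the tensorization fact that $(X\times X,\,d_X\otimes d_X,\,\meas\otimes\meas)$ is itself an $RCD(K,2N)$-space, so that nonnegative subharmonic functions on it obey a De~Giorgi-Moser type mean value inequality.

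\emph{Step 1 (Subharmonicity of squared distance to a point).} For each fixed $Q\in Y$, testing harmonicity of $u$ against the geodesic variation $u_t(x):=(1-t)u(x)+tQ$ (which is admissible because $Y$ is $CAT(0)$) and using the $2$-convexity of $d_Y^2(\cdot,Q)$ should yield, in the Gigli-Tyulenev distributional calculus on $W^{1,2}(\Omega,Y)$,
\[
\varphi_Q(x):=d_Y^2(u(x),Q)\in W^{1,2}_{\rm loc}(\Omega),\qquad \Delta\varphi_Q\ \geq\ 2\,e_2[u]\,\meas,
\]
where $e_2[u]$ denotes the Korevaar-Schoen/Gigli-Tyulenev energy density. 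This is the $RCD$ analogue of the classical Jost-Korevaar-Schoen identity $\Delta|u-Q|^2=2|\nabla u|^2$.

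\emph{Step 2 (Coupling on the product and mean value).} Define $\rho(x,y):=d_Y(u(x),u(y))$ on $\Omega\times\Omega$. Applying Step~1 in each variable with the other frozen, and using the tensorization $\Delta_{X\times X}=\Delta_x+\Delta_y$, one obtains
\[
\Delta_{X\times X}(\rho^2)\ \geq\ 2\bigl(e_2[u](x)+e_2[u](y)\bigr)
\]
away from the diagonal; a cut-off and approximation argument extends this subharmonicity to all of $\Omega\times\Omega$. Then the mean value inequality for nonnegative subharmonic functions on the $RCD(K,2N)$-space $X\times X$ gives
\[
\rho^2(x,y)\ \leq\ C\,\fint_{B_r((x,y))}\rho^2\,d(\meas\otimes\meas)
\]
for $r$ smaller than the distance to $\partial(\Omega\times\Omega)$. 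Choosing $r\asymp d_X(x,y)$, the right-hand side is controlled via the very definition of ${\rm ks}^2_{2,r}[u,\Omega]$ by $d_X(x,y)^2\cdot E_2^\Omega(u)$ up to constants depending on the Bishop-Gromov geometry of $X$. Dividing by $d_X(x,y)^2$ produces the Lipschitz estimate
\[
\frac{d_Y(u(x),u(y))}{d_X(x,y)}\ \leq\ C\bigl(K,N,\operatorname{dist}(\cdot,\partial\Omega),E_2^\Omega(u)\bigr)
\]
on compact subsets of $\Omega$, which is exactly the desired interior Lipschitz continuity.

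\textbf{Main obstacle.} The delicate point is Step~2, where one must make sense of the differential inequality for $\rho^2$ on the product $RCD$-space. This demands (a) a full tensorization of the Cheeger-Dirichlet structure together with its Bochner inequality, (b) a careful treatment of the diagonal singularity, where $\rho$ is merely Lipschitz rather than smooth, so that the distributional subharmonicity holds globally and not just off the diagonal, and (c) identifying the intrinsic energy density $e_2[u]$ of Step~1 with the almost-everywhere limit of the approximating densities ${\rm ks}^2_{2,r}[u,\Omega]$ used in the mean value step, so that the two notions of ``energy'' in the argument refer to the same quantity. Step~1 reduces to a testing argument in the Gigli-Tyulenev calculus, and the mean value inequality needed in Step~3 is a standard consequence of the volume-doubling and Poincaré properties of any $RCD(K,2N)$-space.
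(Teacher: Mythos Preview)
First, a framing remark: the paper does not itself prove Theorem~\ref{thm-1.2}. It is stated as a result of Mondino--Semola and Gigli, and when it is restated as Theorem~\ref{thm-3.3}(1), the Lipschitz assertion and the quantitative estimate~\eqref{cheng-yau} are simply cited from \cite{MS23,Gig23}. Only part (2) of Theorem~\ref{thm-3.3}, the distributional subharmonicity of $d_Y(u(\cdot),P)$, receives an argument in the paper, and that argument is essentially your Step~1.

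Your outline, however, contains a genuine gap, and it lies not in the technical points you flag under ``Main obstacle'' but in the final deduction of Step~2. Granting the tensorization and the inequality ${\bf\Delta}_{X\times X}\rho^2\gs 2\big(e_u(x)+e_u(y)\big)\gs0$, the mean value inequality at scale $r$ gives
\[
\rho^2(x,y)\ \ls\ C\fint_{B_r((x,y))}\rho^2\, d(\meas\otimes\meas).
\]
With $r=d(x,y)$ the ball $B_r((x,y))$ is comparable to $B_r(x)\times B_r(y)$, and a short computation using doubling shows that the right-hand side is comparable to $r^2\fint_{B_r(x)}{\rm ks}^2_{2,cr}[u,\Omega]\,d\meas$ for some constant $c$. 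Your claim that this is bounded by $C\,r^2\,E_2^\Omega(u)$ is where the argument breaks: the total energy $E_2^\Omega(u)=\int_\Omega e_u\,d\meas$ controls $\int_{B_r(x)}{\rm ks}^2_{2,cr}[u,\Omega]\,d\meas$, not its \emph{average}, so at best you obtain $\fint_{B_r(x)}{\rm ks}^2_{2,cr}\ls E_2^\Omega(u)/\meas(B_r(x))$, which blows up as $r\to0$. A uniform bound on $\fint_{B_r(x)}{\rm ks}^2_{2,cr}$ as $r\to0$ is precisely local boundedness of the energy density $e_u$, i.e.\ the local Lipschitz continuity of $u$---so the argument is circular. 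Subharmonicity of $\rho^2$ on the product, applied at a \emph{fixed} scale $R$ comparable to the distance to $\partial\Omega$, yields only local boundedness of $u$, not a Lipschitz bound.

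What the actual proofs in \cite{MS23,Gig23} supply is a Bochner--Eells--Sampson type differential inequality for the energy density itself (schematically ${\bf\Delta}e_u\gs -2Ke_u$ in the sense of distributions), from which an $L^1\to L^\infty$ bound on $e_u$ follows by Moser iteration or heat-kernel methods; this is the step that converts finite energy into a pointwise Lipschitz bound and is the analytic heart of the result. Your Step~1 gives ${\bf\Delta}d_Y^2(u,P)\gs 2e_u$, which is the starting point, but it is not a differential inequality for $e_u$, and the passage from one to the other is exactly the missing idea.
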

However, they did not address boundary regularity questions. Recall that the boundary regularity for harmonic maps between smooth manifolds has been established by Schoen-Uhlenbeck \cite{SU83}.

 Our goal in the present paper is to study the boundary regularity for harmonic maps into/between singular metric spaces. Up to now, there are only a few answers for several special cases. 
The first is the case where the domain $\Omega$ is smooth. In 1994, Serbinowski \cite{Ser94} proved that the harmonic map $u$  is H\"older continuous near the boundary, assuming that its trace is so and that $\Omega$ satisfies a condition in terms of locally exterior cones. In particular, he proved the following
\begin{theorem}[Serbinowski]\label{thm-1.3}
 Let $\Omega$ be bounded in a smooth Riemannian manifold, and let $(Y,d_Y)$ be a $CAT(0)$ space.  Suppose that $w\in W^{1,2}(\Omega,Y)$ and  $u\in W^{1,2}_{w}(\Omega,Y)$ is a  harmonic map. Then
 
 (A)  if $\partial\Omega$ is Lipschitz (i.e., it can be locally written as a graph of a Lipschitz function) and if $w\in C^\alpha({\overline\Omega},Y)$, then $u\in C^\beta(\overline{\Omega},Y)$ for some small $\beta\in(0,\alpha]$.
 
 (B)  if  $\partial \Omega$ is of $C^1$ and  $w\in Lip({\overline\Omega},Y)$, then $u\in C^{1-\epsilon}(\overline{\Omega},Y)$ for every $\epsilon>0$. 
 \end{theorem}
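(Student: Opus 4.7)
The plan is to follow the comparison/monotonicity strategy that Serbinowski pioneered, exploiting two structural features of harmonic maps into $CAT(0)$ targets: for every fixed $Q\in Y$, the function $x\mapsto d_Y^2(u(x),Q)$ is weakly subharmonic, and the Dirichlet energy is convex along geodesic homotopies (Korevaar--Schoen). First I would extend $w$ from $\overline\Omega$ to a neighborhood while preserving its modulus of continuity ($C^\alpha$ in case (A), Lipschitz in case (B)), and use the smoothness of the Riemannian source to work in normal coordinates, so that near a boundary point $x_0\in\partial\Omega$ the domain is modeled by either a Lipschitz graph (A) or a $C^1$ graph (B) over a hyperplane.

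The core step is a boundary energy decay estimate. For $r$ small, I would compare $u$ on $B_r(x_0)\cap\Omega$ with an explicit competitor $\tilde u$ that equals $w$ on $\partial\Omega\cap B_r(x_0)$, agrees with $u$ on $\partial B_r(x_0)\cap\Omega$, and is constructed by geodesic interpolation in $Y$ from $w$ on $\partial\Omega$ toward a base point $w(x_0)$, exploiting the exterior cone of opening $\theta_0>0$ guaranteed by the Lipschitz boundary (resp. $\theta_0$ arbitrarily close to $\pi$ by $C^1$ boundary). Harmonicity of $u$ yields $E[u,B_r(x_0)\cap\Omega]\le E[\tilde u,B_r(x_0)\cap\Omega]$. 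A capacity-type computation on the cone, using the geodesic convexity of $d_Y$, then produces
\[
r^{2-n}\int_{B_r(x_0)\cap\Omega}|\nabla u|^2\,d\mathrm{vol}\;\le\; C\,r^{2\beta},
\]
where $\beta=\beta(\alpha,\theta_0)>0$ is small in case (A), while in case (B) one may choose $\theta_0$ so close to $\pi$ and use an affine extension of the Lipschitz trace on the half-space model so that $\beta$ may be taken arbitrarily close to $1$.

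Once this Morrey-type growth is established, the subharmonicity of $d_Y^2(u(\cdot),Q)$ gives a mean-value inequality on $B_r(x_0)\cap\Omega$ that, combined with a Poincaré inequality and the controlled oscillation of $w$ on $\partial\Omega\cap B_r(x_0)$, yields the pointwise oscillation bound
\[
\operatorname{osc}_{B_r(x_0)\cap\Omega} u \;\le\; C\,r^{\beta}.
\]
Matching this with the interior Lipschitz bound of Korevaar--Schoen (valid here because the source is smooth) via a standard covering/interpolation argument between interior points and their nearest boundary points gives the global modulus: $u\in C^\beta(\overline\Omega,Y)$ in (A) and $u\in C^{1-\epsilon}(\overline\Omega,Y)$ in (B) for every $\epsilon>0$.

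I expect the main obstacle to be the competitor construction yielding the sharp exponent. In case (A) the thinness of the exterior cone ($\theta_0$ only bounded away from $0$) makes the capacity estimate delicate and forces $\beta$ to degenerate with the Lipschitz constant of $\partial\Omega$, so one must carefully balance the geodesic interpolation rate against the $C^\alpha$ rate of $w$. In case (B) the near-optimal exponent $1-\epsilon$ requires showing that the competitor energy inside $B_r(x_0)\cap\Omega$ is $o(r^{n-2+2(1-\epsilon)})$ for every $\epsilon>0$, which demands a linear-in-Euclidean-coordinates extension of $w$ into the approximating half-space that is then transplanted into $Y$ via geodesic homotopy; verifying that the transplantation does not destroy the almost-linear energy is the crux of the argument and relies essentially on $CAT(0)$ convexity.
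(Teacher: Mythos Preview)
The paper does not give its own proof of Theorem~\ref{thm-1.3}; it is quoted as Serbinowski's result~\cite{Ser94}. What the paper \emph{does} prove is the more general Theorem~\ref{thm-1.5} (RCD source, exterior density/sphere assumptions), which specializes to Theorem~\ref{thm-1.3}(A) and to the $C^2$-boundary case of (B). So the relevant comparison is between your proposal and the paper's argument for Theorem~\ref{thm-1.5}.

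Your route---competitor via geodesic interpolation, Morrey-type energy decay, then oscillation via subharmonicity---is genuinely different from what the paper does. The paper never builds a competitor map or proves an energy decay estimate. Instead it \emph{reduces to the scalar case}: for each boundary point $x_0$ the function $x\mapsto d_Y(u(x),w(x_0))$ is subharmonic (Theorem~\ref{thm-3.3}(2)), and by the maximum principle it lies below the harmonic function $f$ with boundary data $g_{x_0}(x)=d_Y(w(x),w(x_0))$, which inherits the modulus of $w$ (Theorem~\ref{thm-4.6}, Corollary~\ref{cor-4.7}). The work is then entirely scalar: for (A) one invokes Bj\"orn's result (Lemma~\ref{lem-2.4}); for (B) the paper proves a boundary growth estimate for harmonic \emph{functions} (Lemma~\ref{lem-4.2}, Theorem~\ref{thm-4.3}) via heat-flow regularization of the boundary data, a barrier built from $d_{y_0}^{-m}$ and the Laplacian comparison (Lemma~\ref{lem-2.5}), and an iteration that pushes the H\"older exponent up to $1-\epsilon$. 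The final gluing with the interior Lipschitz estimate is the same three-case covering you describe.

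What each buys: your energy-decay scheme is closer to Serbinowski's original and ties the exponent transparently to the exterior-cone aperture, but it needs a careful competitor construction in $Y$ and a smooth (or at least fairly regular) source to run the capacity computation. The paper's reduction-to-scalar approach avoids any $Y$-valued competitor entirely and is what makes the argument portable to RCD sources; the price is separate scalar machinery (heat kernel bounds, barriers) and an iteration to reach $1-\epsilon$.
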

\noindent Here a map $w\in C^\alpha(\overline{\Omega},Y)$ means 
 $$d_Y\big(u(x),u(y)\big)\ls Ld^\alpha d(x,y) \quad \forall x,y\in \overline\Omega$$
 for some constant $L>0$ and $\alpha\in(0,1).$
  The second is the case where the domain $\Omega$ is in an Alexandrov space with curvature bounded below. Huang-Zhang \cite{HZ17} proved that the harmonic map $u\in W^{1,2}_{w}(\Omega,Y)$  is H\"older continuous (with a small H\"older index)  near the boundary, assuming that $w\in Lip(\overline{\Omega},Y)$ and that $\Omega$ satisfies a condition in terms of locally exterior balls.

 In this paper, we will be concerned with the boundary regularity of harmonic maps on $RCD$ spaces. We first recall some classical conditions as follows.
 \begin{defn} \label{def-1.4}
 Let $\Omega $ be a domain in a metric measure space $(X,d,\meas)$. It is said to satisfy an {\emph{exterior density condition}} if  there exist two numbers $\lambda\in(0,1)$ and $R_0>0$ such that  
\begin{equation} \label{equ-1.3}
\meas(  B_r(x)\setminus \Omega)\gs \lambda \cdot \meas(B_r(x)) \quad \forall\ x\in \partial{\Omega},\ \  \forall r\in(0,R_0).
\end{equation}
It is said to satisfy a  {\emph {uniformly exterior sphere condition}} if there exists a number $R_0>0$ such that for each $x_0\in\partial \Omega$ there exists a ball
 $ B_{R_0}(y_0)$ satisfying
\begin{equation}\label{equ-1.4}
\Omega\cap B_{R_0}(y_0)=\emptyset\quad {\rm and}\quad x_0\in \partial\Omega\cap \partial B_{R_0}(y_0).
\end{equation}
\end{defn}
 From the volume doubling property, one knows if $\Omega$ satisfies a uniformly exterior sphere condition then it also satisfies an exterior density condition.
It is well-known that if $\partial \Omega$ is Lipschitz then it satisfies an exterior density condition, and  if $\partial \Omega$ is of $C^2$ on a smooth manifold then it satisfies an
exterior sphere condition.

The main result in this paper is the following boundary regularity of harmonic maps.
  \begin{theorem}\label{thm-1.5}
 Let $\Omega, Y$ be as in the above Theorem \ref{thm-1.2}. Let $w\in W^{1,2}({\Omega},Y)$ and $u\in W^{1,2}_{w}(\Omega,Y)$ be a harmonic map.  Then
 
 (A) If  $\partial\Omega$ satisfies an exterior density condition and   if $w\in C^\alpha({\overline\Omega},Y)$, then $u\in C^\beta(\overline{\Omega},Y)$ for some small $  \beta\in(0,\alpha)$.
 
 (B) If $\partial \Omega$ satisfies a uniformly exterior sphere condition and if $w\in Lip({\overline\Omega},Y)$, then $u\in C^{1-\epsilon}(\overline{\Omega},Y)$ for every $\epsilon>0$. 
  \end{theorem}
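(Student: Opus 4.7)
The plan is to reduce both parts of the theorem to a quantitative modulus of continuity at each boundary point $x_0 \in \partial\Omega$ and then glue with the interior Lipschitz bound from Theorem \ref{thm-1.2}. The analytical workhorse is the fact that, for any fixed $P \in Y$, the composite function $h_P(x) := d_Y^2(u(x), P)$ is subharmonic on $\Omega$. This is the $RCD$ analogue of a result going back to Korevaar-Schoen (used in the Alexandrov setting by Huang-Zhang), and follows from the convexity of $d_Y^2(\cdot, P)$ on the $CAT(0)$ target combined with the Gigli-Tyulenev Sobolev calculus; variationally one obtains $\Delta h_P \geq 0$ as a Radon measure on $\Omega$. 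Boundary regularity for $u$ then reduces to controlling $h_P$ near $x_0$ using the boundary data of $w$.

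For part (A), fix $x_0 \in \partial\Omega$ and set $P = w(x_0)$. Since $u = w$ on $\partial\Omega$, the H\"older assumption gives $h_P \leq L^2 d(\cdot, x_0)^{2\alpha}$ on $\partial\Omega$. Let $M(r) := \sup_{\Omega \cap B_r(x_0)} h_P$, and compare $h_P$ on $\Omega \cap B_r(x_0)$ with the harmonic function $v_r$ taking boundary values $M(r)$ on $\Omega \cap \partial B_r$ and $L^2 r^{2\alpha}$ on $\partial\Omega \cap \overline B_r$; subharmonic comparison yields $h_P \leq v_r$. The exterior density condition, together with volume doubling and the weak $(1,2)$-Poincar\'e inequality valid on $RCD(K,N)$, yields via a standard Wiener-type bound a constant $c_0 \in (0,1)$ such that the harmonic measure of $\partial\Omega \cap \overline B_r$ seen from any point of $\Omega \cap B_{r/2}$ is at least $c_0$. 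Hence
$$M(r/2) \leq (1-c_0) M(r) + c_0 L^2 r^{2\alpha},$$
and a routine Campanato-style iteration promotes this to $M(r) \leq C r^{2\beta}$ for some $\beta \in (0, \alpha)$. Taking square roots and gluing with the interior Lipschitz bound yields $u \in C^\beta(\overline\Omega, Y)$.

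For part (B), the exterior sphere condition supplies an explicit superharmonic barrier via the Laplacian comparison on $RCD(K,N)$. Writing $\rho(x) := d(x, y_0) \geq R_0$ on $\Omega$, the estimate $\Delta \rho \leq (N-1)/\rho$ (modulo lower-order $K$-terms negligible on a small ball) implies that the radial function $\psi(x) := R_0^{2-N} - \rho(x)^{2-N}$ (with analogous choices for $N \leq 2$) is superharmonic in a small neighborhood of $x_0$, vanishes at $x_0$, is positive elsewhere in $\Omega$, and satisfies $\psi(x) \geq c(\rho(x) - R_0)$ near $x_0$. The pointwise bound $h_P \leq C\psi$ alone yields only a preliminary boundary H\"older estimate, since $\rho - R_0$ degenerates tangentially along $\partial B_{R_0}(y_0)$. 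To reach the sharp exponent, one upgrades to a Campanato-Morrey boundary energy decay
$$\int_{\Omega \cap B_r(x_0)} |\nabla u|^2 \, d\meas \leq C_\epsilon \, r^{N - 2\epsilon}\qquad \forall\,\epsilon > 0,$$
obtained by iterating the Dirichlet energy-minimality of $u$ against a metric competitor built from a barrier-controlled cutoff (in the spirit of Serbinowski). An $RCD$ version of the Morrey-Campanato criterion then delivers $u \in C^{1-\epsilon}(\overline\Omega, Y)$.

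The main obstacle lies in part (B): the barrier $\psi$ detects only the radial defect $\rho(x) - R_0$, which vanishes along the whole intersection $\partial\Omega \cap \partial B_{R_0}(y_0)$, so the optimal exponent $1-\epsilon$ cannot be read off directly from the maximum principle. Overcoming this requires the Campanato-type boundary energy iteration, which in turn rests on constructing admissible metric-valued comparison maps on non-smooth domains and on the robust interior Lipschitz estimate of Theorem \ref{thm-1.2} to close the bootstrap. The remaining ingredients---subharmonicity of $h_P$, the Wiener-type argument for (A), and the barrier construction via Laplacian comparison---are all adaptations of classical techniques to the $RCD$ setting via the Gigli-Tyulenev calculus.
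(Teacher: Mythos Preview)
Your treatment of (A) is essentially the paper's: subharmonicity of the distance to a point in $Y$ (the paper uses the linear distance $d_Y(u(\cdot),P)$ rather than its square, but this is immaterial) combined with a Wiener-type boundary decay under the exterior density condition (the paper quotes Bj\"orn's result, Lemma~2.4), then gluing with the interior Lipschitz bound. This part is sound.

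For (B) your plan diverges from the paper and the hard step is not carried out. You correctly note that a single barrier applied directly to $d_Y^2(u,P)$ sees only the radial defect $\rho-R_0$ and cannot by itself yield the exponent $1-\epsilon$; you then propose to close via a Campanato--Morrey boundary energy decay $\int_{\Omega\cap B_r(x_0)}|\nabla u|^2\,d\meas\ls C_\epsilon r^{N-2\epsilon}$ built from metric-valued competitors, followed by a Morrey embedding. Two problems remain. First, the competitor construction for $CAT(0)$-valued maps on non-smooth $RCD$ domains is precisely the crux, and you only name it; Serbinowski's argument relies on the smooth boundary structure to define partial traces and glue comparison maps, and no $RCD$ substitute is offered. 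Second, the scaling $r^{N-2\epsilon}$ tacitly assumes Ahlfors $N$-regularity of $\meas$, which $RCD(K,N)$ spaces need not enjoy, so the Morrey--Campanato criterion you invoke is not available as stated.

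The paper avoids both issues by a different mechanism. It first reduces to a harmonic \emph{function}: for each $x_0\in\partial\Omega$ one solves ${\bf\Delta}f=0$ with Lipschitz boundary data $g_{x_0}(x):=d_Y(w(x_0),w(x))$, and subharmonicity of $d_Y(u(\cdot),w(x_0))$ together with the maximum principle give $d_Y(u(x),w(x_0))\ls f(x)$ (Theorem~4.6). The $C^{1-\epsilon}$ boundary estimate for the scalar $f$ is then obtained by a heat-flow-regularized iteration (Lemma~4.2, Theorem~4.3): set $v_t:=f_t-H_t\tilde g$, where $f_t$ is harmonic with boundary data $H_t\tilde g$. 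Then $v_t\equiv0$ on \emph{all} of $\partial\Omega$ and ${\bf\Delta}v_t=-\Delta H_t\tilde g\gs -C Lt^{-1/2}$, so the exterior-sphere barrier $h(x)=B\big((R/2)^{-m}-d_{y_0}^{-m}(x)\big)$ from Lemma~2.5 dominates $v_t$ on the full boundary of $\Omega\cap B_R(y_0)$, giving $v_t(x)\ls C\,M_{x_0}(2R)\,d(x,x_0)/R$. Balancing the heat-flow error $Lt^{1/2}$ against $R$ produces the growth recursion $M_{x_0}(r)\ls C\,M_{x_0}(2r^{1/(1+\epsilon)})\,r^{\epsilon/(1+\epsilon)}+CLr$, which is iterated with $\epsilon_j=2^{-j}$ to drive the H\"older exponent to $1$. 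The tangential degeneracy you flag never arises, because the barrier is compared to $v_t$ (vanishing on all of $\partial\Omega$) rather than to $d_Y(u,P)$; and no metric competitors or Campanato embedding are needed.
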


\begin{remark}\label{rem-1.6}
In the case when $Y=\mathbb R$, i.e., $u$ is a harmonic function, the conclusion of  Theorem \ref{thm-1.5}(A) has been proved by Bj\"orn in \cite{Bjo02}, under the weaker assumption that the measure $\meas$ is doubling and that $(X,d,\meas)$ supports an $L^2$-Poincar\'e inequality.
\end{remark}


\begin{remark} \label{rem-1.8}
The conclusion of Theorem \ref{thm-1.5}(B) is sharp. One can not improve the H\"older regularity of Theorem \ref{thm-1.5}(B) to Lipschitz regularity. Indeed, let us recall some known results about harmonic functions on an Euclidean space (see, for example, \cite{GT01,Ken94}). Let $D\subset \mathbb R^n$ be a bounded domain, $n\gs 2,$ and let $u\in W^{1,2}(D)$ be a  harmonic function on $D$ with boundary data $\phi \in Lip(\overline{D})$ ( i.e., $u=\phi$ on $\partial D$). Then

(1)   if $\partial D$ satisfies an exterior density condition, then $u\in C^\beta(\overline{D})$, for some small $\beta>0$,

(2) if $\partial D$ is of $C^1$, then $u\in C^{1-\epsilon}(\overline{D})$ for every $\epsilon>0$, 

 
 (3) One can not expect  that $u\in Lip(\overline{D})$, even $\partial D$ is of $C^\infty$ (see the following Example \ref{example-1.9}).  

\end{remark}

\begin{exam}  \label{example-1.9} 
Consider  the upper semi-plane $\mathbb R^2_{+}=\{(x,t)|\ t\gs0, x \in\mathbb R\}$. Let $\phi$ be defined on $\mathbb R$ by
$$\phi(x)=|x|, \quad {\rm if}\quad  |x|\ls 1 \qquad {\rm and}\quad   \phi(x)=(2-|x|)^+\quad {\rm otherwise}.$$
The harmonic function on $\mathbb R^2_{+}$ with boundary data $\phi$ is uniquely determined by the Poisson integral formula,
$$u(x,s)= \int_{\mathbb R} \frac{s\phi(x)}{|x-y|^2+s^2}dy.$$
 By directly calculating, we have
 \begin{equation*}
 \begin{split}
 \frac{\partial u(0,s)}{\partial s}&=\int_{|y|\ls1}\frac{y^2-s^2}{(y^2+s^2)^2} |y|dy+\int_{2\gs |y|\gs1}\frac{y^2-s^2}{(y^2+s^2)^2}(2-|y|) dy\\
 &:= I_1(s)+I_2(s)
    \end{split}
 \end{equation*}
where 
$$I_1(s)=\int_{|y|\ls1}\frac{y^2-s^2}{(y^2+s^2)^2} |y|dy= 2\int_{0}^1\frac{y^2-s^2}{(y^2+s^2)^2} ydy  =\int_0^1\frac{y^2-s^2}{(y^2+s^2)^2} d(y^2)   =O(\log(1/s))$$
as $s\to 0^+$, and
$$|I_2(s)|\ls\int_{2\gs |y|\gs1}\frac{|y^2-s^2|}{(y^2+s^2)^2}(2-|y|) dy\ls 8 \quad \forall s\in(0,1).$$
Therefore, we conclude  
\begin{equation*}
|\nabla u(0,s)| \gs \left|\frac{\partial u(0,s)}{\partial s}\right| =O(\log(1/s)), \quad {\rm as} \ s\to0^+.
 \end{equation*}
\end{exam}

 We will organize this paper as follows.   In Section 2, we will provide some necessary materials on  $RCD(K,N)$ spaces. In Section 3,  we will review the concepts of the Sobolev energy and the harmonic maps between metric spaces.  The Section 4 is devoted
to the proof of the main Theorem \ref{thm-1.5}.  \\

\noindent{\bf Acknowledgement\ } The authors are supported by NSFC 12025109 and 12271530.

\section{Preliminaries}

 Let $(X,d,\meas)$ be a metric measure space, i.e., $(X,d)$ is a complete and separable metric space equipped with a non-negative Borel measure which is finite on any ball $B_r(x)\subset X$ and ${\rm supp}(\meas)=X$. We will denote by $B_r(x_0)$ the open ball in $X$. Given an open domain $\Omega\subset X$, we denote by $Lip(\Omega)$ (resp.   $C_0(\Omega)$, $Lip_0(\Omega)$, $Lip_{\rm loc}(\Omega)$) the space of Lipschitz continuous (resp. continuous with compact support, Lipschitz continuous with compact support, locally Lipschitz continuous) functions on $\Omega$, and by $L^p(\Omega):=L^p(\Omega, \meas)$ for any $p\in[1,+\infty]$. For any $f:E\to \mathbb R$ on a measurable subset $E\subset X$, we denote by 
 $$\fint_Efd\meas:=\frac{1}{\meas(E)}\int_Efd\meas.$$

 Let $f\in Lip_{\rm loc}(X)$, the \emph{pointwise Lipschitz constant} or \emph{slope} is given by
 \begin{equation*}
 {\rm Lip}f(x):=
 \begin{cases}
 \limsup_{y\to x}\frac{|f(y)-f(x)|}{d(x,y)}= \lim_{r\to 0^+}\sup_{y\in B_r(x)}\frac{|f(y)-f(x)|}{r} &   {\rm if}  x\ {\rm is\ not \ isolated},\\
 0&\quad {\rm otherwise}.
 \end{cases}
 \end{equation*}
 The \emph{Cheeger energy} ${\rm Ch}: L^2(X)\to [0,+\infty]$ is  
 $${\rm Ch}(f):=\frac{1}{2}\inf\Big\{\liminf_{j\to +\infty}\int_X({\rm Lip}f_j)^2d\meas:\ f_j\in Lip_{\rm loc}(X)\cap L^2(X),\ \ f_j\overset{L^2}{\to }f\Big\}.$$
For any $f\in W^{1,2}(X)$, it was showed \cite{Che99,AGS14a} that there exists a  {\rm minimal weak upper gradient} $|\nabla f|$ such that 
 $${\rm Ch}(f)=\frac{1}{2}\int_X|\nabla f|^2d\meas,$$
 and for any $f\in Lip(X) $ that $|\nabla f|={\rm Lip}f$  $\meas-$a.e. in $X$.  
 The Sobolev space $W^{1,2}(X):= \{f\in L^2(X): \ {\rm Ch}(f)<+\infty\}$.

There are several equivalent definitions for the \emph{Riemannian curvature-dimension condition} on metric measure spaces \cite{EKS15,AMS16,CM21,AGS14b}.  We state one as follows.
\begin{defn}\label{def-2.1}
Let $K\in\mathbb R$ and $N\in(1,+\infty]$, a metric measure space $(X, d, \meas)$ is called an $ RCD(K, N)$-space  
if the following  conditions hold:
   \begin{enumerate}
       \item  The Sobolev space $W^{1,2}(X)$ is a Hilbert space.  In this case, it was proved \cite{Gig15} that  for any $f,g\in W^{1,2}(X)$ the limit
  \begin{equation}\label{equ-2.1}
  \ip{\nabla f}{\nabla g}:=\lim_{\epsilon\to0^+}\frac{|\nabla (f+\epsilon g)|^2-|\nabla f|^2}{2\epsilon}
  \end{equation}
    exists and is in $L^1(X)$. This provides a canonical Dirichlet form 
    $\mathscr E(f,g):=\int_X\ip{\nabla f}{\nabla g}d\meas$ for any $ f,g\in W^{1,2}(X).$
     We denote by $\Delta$   the infinitesimal generator  of $\mathscr E$ and by   $\{H_t:=e^{t\Delta}\}_{t\gs0}$ the corresponding  semi-group (heat flow). The domain of $\Delta$ is denoted by $D(\Delta)$. 
             \item  For any $f\in D(\Delta )$ with $\Delta f\in W^{1,2}(X)$, it holds  the weak Bochner inequality
 \begin{equation}\label{equ-2.2}
 \int_X|\nabla f|^2\Delta gd\meas\gs \int_X\Big( \frac{(\Delta f)^2}{N} +\ip{\nabla f}{\nabla \Delta f}+K|\nabla f|^2\Big)gd\meas
 \end{equation}
     for any $g\in D(\Delta )\cap L^\infty(X)$ with $\Delta g\in L^\infty(X)$ and $g\gs 0$.
 \item   There  are some point $x_0\in X$ and  constants $c_1,c_2>0$   such that $\meas(B_r(x_0))\ls c_1 e^{c_2r^2}$ for all $r>0$.
 \item For any $f\in W^{1,2}(X)$ with $|\nabla f|\ls 1$ $\meas-$a.e. in $X$, there exists a representative $g\in Lip(X)$ with Lipschitz constant 1 such that $g=f$ $\meas-$a.e. in $X$.
  \end{enumerate}
\end{defn}

 Let $(X,d,\meas)$ be an $RCD(K,N)$ space for some $K\in\mathbb R$ and $N\in [1,+\infty)$. It is  a geodesic space (see \cite{AGS14b}), i.e., for  any two points $x,y\in X$, there exists a curve $\gamma$ connecting them such that the length $L(\gamma)=d(x,y).$ Recall that the length of a continuous curve  $\gamma:[a,b]\to X$ is defined by
\begin{equation*}
L(\gamma)=\sup_{a=a_0<a_1<\cdots<a_m=b}\sum_{i=0}^{m-1}d\big(\gamma(a_i),\gamma(a_{i+1})\big).
\end{equation*}
 Any closed ball $\overline{B_R(x)}\subset X$ satisfies a standard assumption that  $(\overline{B_R(x)}, d, \meas)$ has a volume doubling property and supports an $L^2$-Poincar\'e inequality, both the doubling constant $C_D$ and the Poincar\'e constant $C_P$ depend only on $N,K,R$  (see  \cite{Stu06b,LV09}).
 
  The semi group $\{H_t\}_{t\gs0}$ on $L^2(X)$  admits a heat kernel $p_t(x,y)$ on $X$  (see \cite{Stu95,AGS14a}) such that 
$$H_tf(x)=\int_Xp_t(x,y) f(y)d\meas(y),\quad \forall f\in L^2(X).$$
The heat kernel  is Lipschitz continuous (see \cite{HZ20}) and  satisfies the stochastic completeness, i.e.,
\begin{equation}\label{equ-2.3}
\int_Xp_t(x,y)d\meas(y)=1 
\end{equation} 
for all $x\in X$.   An upper bound of $p_t(x,y)$ was proved in \cite{JLZ16} that 
\begin{equation*} 
p_t(x,y)\ls \frac{C_{N,K}}{m\big(B_{\sqrt t}(x)\big) }\exp\Big(-\frac{d^2(x,y)}{5t}+C_{N,K}\cdot t\Big)
\end{equation*}
for some constant $C_{N,K}>0$ depending only on $N$ and $K$.  Erbar–Kuwada–Sturm \cite{EKS15} proved the Bakry-Ledoux' estimate: for  every $f\in W^{1,2}(X)$ and every $t>0$ it holds  
 \begin{equation}\label{equ-2.4}
 |\nabla (H_tf)|^2+\frac{4Kt^2}{N(e^{2Kt}-1)}|\Delta H_tf|^2\ls e^{-2Kt}H_t(|\nabla f|^2)\quad \meas{\rm-a.e.\ \ in}\ X. 
 \end{equation}

\begin{defn}[Local Sobolev Space] 
 Let $\Omega\subset X$ be an open set. A  function $f\in L^2_{\rm loc}( \Omega)$ belongs to
$W^{1,2}_{\rm loc}(\Omega)$, provided, for any    $\chi\in Lip_0(\Omega)$   it holds
$f\chi\in W^{1,2}(X)$, where $f\chi$ is understood to be  $0$ outside from $\Omega$. In this case, the function $|\nabla f| : \Omega\to[0,\infty]$ is $\meas$-a.e. defined by
$$|\nabla f| := |\nabla (\chi f)|,\ \ \meas-a.e. \,\mathrm{on} \ \{\chi=1\},$$
 for any $\chi$ as above. 
\end{defn}
The space 
$$W^{1,2}(\Omega):=\{f\in W^{1,2}_{\rm loc}(\Omega):\  f, |\nabla f|\in L^2(\Omega)\}$$
 with the norm  
 $$\|f\|_{W^{1,2}(\Omega)}:=(\|f\|^2_{L^2(\Omega)}+\||\nabla f|\|^2_{L^2(\Omega)})^{1/2}.$$  Finally, the space $W_0^{1,2}(\Omega)$ is defined as the $W^{1,2}(\Omega)$-closure of the space of functions $f\in W^{1,2}(\Omega)\cap C(\Omega)$ with ${\rm supp}(f)\subset \Omega.$

Recall the notion of the measure-valued  Laplacian in  \cite{Gig13}.
\begin{defn}\label{def-2.3}
  Let $\Omega\subset X$ be an open subset. A function $g\in W^{1,2}_{\rm loc}(\Omega)$ is called to be in   $D({\bf\Delta},\Omega)$ if there exists a signed Radon measure $\mu$ on $\Omega$ such that for any
 $\phi\in Lip_0(\Omega)$ it holds
\begin{equation}\label{equ-2.5}
-\int_\Omega\ip{ \nabla f}{\nabla\phi} d\meas =\int_\Omega  \phi d\mu.
\end{equation}
 If such $\mu$ exists,  then it must be unique. In this case, we will write ${\bf\Delta} g=\mu$. 
 
 \end{defn}
\noindent It was shown \cite{Gig13} that the operator ${\bf \Delta}$ is linear, and both the Leibniz rule and the chain rule hold.
In general, the measure-valued Laplacian ${\bf\Delta} f$ for a function $f\in W^{1,2}_{\rm loc}(\Omega)$ may not be absolutely continuous with respect to $\meas$. We consider its Radon-Nikodym decomposition
$${\bf \Delta}f=({\bf \Delta }f)^{\rm ac}\cdot \meas+({\bf \Delta}f)^{\rm sing}.$$
 When $\Omega=X$, the  compatibility of  ${\bf \Delta} $ and the $\Delta$  has been shown in \cite{Gig15}, which states that  {if} $f\in W^{1,2}(X)$ and $g\in L^2(X)$, then
 $$ {\bf \Delta} f=g\cdot \meas \Longleftrightarrow f\in  D(\Delta ) \ {\rm and}\ \Delta f=g.$$
 
Let $f\in D({\bf \Delta},\Omega)$ and $g\in L^1_{\rm loc}(\Omega)$. We denote by  ${\bf \Delta}f\gs g$ as measures (or in the sense of distributions)  if  $-\int_\Omega\ip{ \nabla f}{\nabla\phi} d\meas\gs \int_\Omega g\phi d\meas$ for all nonnegative $\phi\in Lip_0(\Omega)$. 
When $f\in D({\bf \Delta},\Omega)$ and  ${\bf \Delta}f\gs 0$ as measures, we have $f^+:=\max\{f,0\} \in  D({\bf \Delta},\Omega)$ and  ${\bf \Delta}f^+\gs 0$ as measures. We have also the Maximum principle as follows (see \cite{Che99}). Let $f\in D({\bf \Delta},\Omega)$ and  ${\bf \Delta}f\gs 0$ as measures. If $f\ls c$ on $\partial \Omega$ in the sense of $(f-c)^+\in W^{1,2}_0(\Omega)$  then $f\ls c$ almost $\meas$-a.e. in $\Omega.$ In particular, if $f\in W^{1,2}(\Omega)\cap C(\overline{\Omega})$,   if $f\ls c$ on $\partial \Omega$, and if ${\bf \Delta} f\gs0$ as measures, then $f\ls c$ on $\Omega.$

 Given a function $h\in L^2(\Omega)$ and $g\in W^{1,2}(\Omega)$, we can solve the (relaxed) Dirichlet problem of the Poisson equation  (see, for example,  \cite{Che99})
   \begin{equation*}
 \begin{cases}{\bf \Delta}f=h \\ 
f-g\in W_0^{1,2}( \Omega).
 \end{cases}
 \end{equation*}
  The following boundary regularity result for the harmonic functions has been given in \cite{Bjo02}.
\begin{lemma} 
\label{lem-2.4}
Let $\Omega$ be a bounded domain of $(X,d,\meas)$. Assume that
 $ \Omega $ satisfies an exterior density condition. Then for each harmonic function $f\in W^{1,2}(\Omega)$ such that $f-g\in W^{1,2}_0(\Omega)$ for some $g\in C^\alpha(\overline\Omega)\cap W^{1,2}(\Omega)$. Then,  after a redefinition on a
set of measure zero, $f$ is  $C^\beta$-continuous at each $x_0\in\partial\Omega$, for some small $\beta\in (0,\alpha)$. Moreover,  there exist    constants  $r_0\in (0,R_0)$ and $C>0$  such that 
 \begin{equation*}
\sup_{x\in B_r(x_0)} |f(x)-f(x_0)|\ls Cr^\beta
 \end{equation*}
 for all $r\in(0,r_0)$ and all $x_0\in\partial \Omega,$ where $R_0$ is the number given in the definition of exterior density condition, and $C$ depends only on $N,K, {\rm diam}(\Omega),$ and a modulus of the H\"older continuity of $g$, i.e., a constant $C_1>0$ such that 
 $|g(x)-g(x_0)|\ls C_1 d^\alpha(x_0,x)$ for all $x_0,x\in \overline{\Omega}$. (See Equation (16) in \cite{Bjo02} for the details). 
 
 \end{lemma}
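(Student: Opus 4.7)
The plan is to implement the classical De Giorgi oscillation--decay strategy at a boundary point, carried out in the metric--measure setting. Fix $x_0\in\partial\Omega$ and, after replacing $f$ and $g$ by $f-g(x_0)$ and $g-g(x_0)$, assume $g(x_0)=0$; the H\"older bound then reads $|g(x)|\ls C_1 d(x,x_0)^\alpha$ on $\overline\Omega$. Set
$$M(r):=\sup_{\Omega\cap B_r(x_0)}f,\quad m(r):=\inf_{\Omega\cap B_r(x_0)}f,\quad \omega(r):=M(r)-m(r),$$
all finite by the maximum principle stated after Definition~\ref{def-2.3}. The target is a dyadic decay
$$\omega(r)\ls \theta\,\omega(2r)+C_2 r^\alpha\qquad \text{for some fixed } \theta\in(0,1),\ C_2>0,\ r\in(0,r_0),$$
from which H\"older continuity with exponent $\beta:=\min\{\alpha-\epsilon,\ \log_2(1/\theta)\}$ follows by the standard iteration along dyadic scales.

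To prove the decay I split into two regimes. If $\omega(2r)\ls 4C_1 r^\alpha$ the inequality is immediate. Otherwise, set $k:=m(2r)+\tfrac{1}{2}\omega(2r)$, so $k-\sup_{\overline\Omega\cap B_{2r}}g\gs \tfrac{1}{2}\omega(2r)-C_1 r^\alpha>0$, in particular $g\ls k$ on a neighborhood of $\partial\Omega\cap B_{2r}$ in $\overline\Omega$. Define
$$v(x):=\begin{cases}(f(x)-k)^+,& x\in\Omega,\\ 0,& x\in X\setminus\Omega.\end{cases}$$
From $f-g\in W^{1,2}_0(\Omega)$ and the pointwise bound $(f-k)^+\ls (f-g)^++(g-k)^+=(f-g)^+$ in a neighborhood of $\partial\Omega\cap B_{2r}$ one verifies $v\in W^{1,2}_{\rm loc}(X)$, and then the harmonicity of $f$ on $\Omega$ promotes to ${\bf\Delta}v\gs 0$ as a measure on $B_{2r}$ (test against nonnegative $\phi\in {\rm Lip}_0(B_{2r})$ and use $v|_{X\setminus\Omega}=0$). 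The exterior density condition gives
$$\meas\big(\{v=0\}\cap B_{2r}(x_0)\big)\gs \meas\big((X\setminus\Omega)\cap B_{2r}(x_0)\big)\gs \lambda\meas(B_{2r}(x_0)),$$
and $\sup_{B_{2r}}v\ls \tfrac{1}{2}\omega(2r)$ by construction. Invoking the De~Giorgi lemma for nonnegative subharmonic functions on $(X,d,\meas)$ (available since closed balls enjoy volume doubling and an $L^2$-Poincar\'e inequality, cf.\ the remarks after Definition~\ref{def-2.1}), there is a constant $c_0=c_0(\lambda,N,K)>0$ such that
$$\sup_{B_r(x_0)}v\ls (1-c_0)\sup_{B_{2r}(x_0)}v\ls \tfrac{1-c_0}{2}\,\omega(2r).$$
Hence $M(r)-k\ls \tfrac{1-c_0}{2}\omega(2r)$; the symmetric argument applied to $(k-f)^+$ gives $k-m(r)\ls \tfrac{1-c_0}{2}\omega(2r)$. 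Adding yields $\omega(r)\ls (1-c_0)\omega(2r)$, which combined with the trivial regime produces the desired recursion with $\theta:=1-c_0<1$.

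The remaining step, iterating the recursion $\omega(r)\ls \theta\omega(2r)+C_2 r^\alpha$ along $r,2r,4r,\ldots$ up to the scale $R_0$, is a standard geometric-series estimate and delivers $\omega(r)\ls C r^\beta$ for every $r\in(0,r_0)$, with constants depending only on $N,K,{\rm diam}(\Omega),\lambda$, and the H\"older modulus $C_1$ of $g$. Taking $f(x_0):=\lim_{r\to 0^+}M(r)=\lim_{r\to0^+}m(r)=g(x_0)$ (this limit exists because $\omega(r)\to 0$) realizes the continuous representative at $x_0$ and gives the quantitative bound.

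The principal obstacles are technical rather than conceptual: first, one must justify carefully that the truncation $v$ extended by zero across $\partial\Omega$ lies in $W^{1,2}_{\rm loc}(X)$ and satisfies ${\bf\Delta}v\gs 0$ as a measure on $B_{2r}$, which rests on the zero-trace property of $f-g$ combined with the regime choice that places $g$ below the threshold $k$ near $\partial\Omega$; second, one needs the De~Giorgi alternative with a reduction constant $c_0$ depending only on the measure proportion $\lambda$ of the "good set", which in the metric setting is obtained from Moser's $L^2\to L^\infty$ iteration together with a truncated Poincar\'e inequality. Both ingredients are by now standard in the doubling/$L^2$-Poincar\'e framework that $RCD(K,N)$ supplies.
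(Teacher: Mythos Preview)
The paper does not prove Lemma~\ref{lem-2.4}; it quotes it from Bj\"orn~\cite{Bjo02} (the statement itself points to ``Equation~(16) in~\cite{Bjo02}''). Your overall strategy---De~Giorgi oscillation decay at a boundary point, using the exterior density to feed the measure--proportion hypothesis---is indeed the standard one and is essentially what Bj\"orn does in that reference.

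There is, however, a genuine gap in the execution. With $k:=m(2r)+\tfrac12\omega(2r)=\tfrac12\big(M(2r)+m(2r)\big)$ you assert
\[
k-\sup_{\overline\Omega\cap B_{2r}}g\ \gs\ \tfrac12\omega(2r)-C_1 r^\alpha,
\]
which would require $m(2r)\gs \sup_{B_{2r}}g-C_1 r^\alpha$. Nothing prevents the harmonic function $f$ from dipping far below the local range of $g$; $m(2r)$ is not controlled from below by the boundary data on $B_{2r}$. Worse, you then invoke the ``symmetric argument'' for $(k-f)^+$, which needs $g\gs k$ on $\partial\Omega\cap B_{2r}$---the opposite inequality. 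You cannot have both $g\ls k$ and $g\gs k$ unless $g\equiv k$, so the two halves of your decay cannot both be justified with the midpoint level.

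The fix is standard: do not use the midpoint. Take $k_+:=\sup_{\overline\Omega\cap B_{2r}}g\ls C_1(2r)^\alpha$ and $k_-:=\inf_{\overline\Omega\cap B_{2r}}g\gs -C_1(2r)^\alpha$. Then $(f-k_+)^+$ genuinely has zero trace on $\partial\Omega\cap B_{2r}$ (it is dominated by $(f-g)^+\in W^{1,2}_0(\Omega)$), the zero extension lies in $W^{1,2}_{\rm loc}$ and is subharmonic on $B_{2r}$, and the De~Giorgi density lemma gives
\[
M(r)\ \ls\ (1-c_0)M(2r)+c_0\,C_1(2r)^\alpha.
\]
The analogous estimate with $k_-$ controls $m(r)$ from below, and subtracting yields $\omega(r)\ls (1-c_0)\omega(2r)+2c_0 C_1(2r)^\alpha$, after which your iteration goes through unchanged. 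The technical points you flag at the end (membership of the zero extension in $W^{1,2}_{\rm loc}$ and its subharmonicity across $\partial\Omega$) are then handled cleanly precisely because $k_\pm$ sits on the correct side of $g$ throughout $B_{2r}$.
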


The following corollary of the Laplacian comparison theorem \cite{Gig13} will be used to construct the barriers.  
 
\begin{lemma}\label{lem-2.5}
For each $y_0\in X$, the distance function $d_{y_0}(x):=d(x,y_0)$ is in $ D({\bf \Delta}, X\setminus \{y_0\})$, and  there exists a number $m_{N,K}\gs1$ such that  
\begin{equation}\label{equ-2.6}
  {\bf \Delta }d^{-m}_{y_0}\gs  \frac{m^2}{2 R^{m+2}}\quad {\rm on }\quad B_{R}(y_0)\setminus \{y_0\}
  \end{equation}
for all $R\in(0,1)$ and all $m\gs m_{N,K}$, in the sense of distributions.
\end{lemma}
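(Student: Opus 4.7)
The plan is to combine two ingredients: Gigli's Laplacian comparison for distance functions on $RCD(K,N)$ spaces (proved in \cite{Gig13}) and the chain rule for the measure-valued Laplacian ${\bf \Delta}$ (recorded in the paper just after Definition \ref{def-2.3}). Write $r := d_{y_0}(x)$. The comparison theorem gives $d_{y_0} \in D({\bf \Delta}, X \setminus \{y_0\})$ together with a distributional upper bound that, after estimating the model-space generalized cotangent on the bounded interval $(0,1)$, takes the form
$${\bf \Delta} d_{y_0} \ls \Big(\frac{N-1}{r} + C_{N,K}\Big) \cdot \meas \qquad \text{on } B_1(y_0) \setminus \{y_0\},$$
for some constant $C_{N,K}$ depending only on $N$ and $K$.

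Next, I would apply the chain rule to $\phi(t) := t^{-m}$, which is smooth on $(0,\infty)$. Since $|\nabla d_{y_0}| = 1$ $\meas$-almost everywhere on $X \setminus \{y_0\}$, this yields
$${\bf \Delta}(d_{y_0}^{-m}) = m(m+1)\,d_{y_0}^{-m-2} \cdot \meas \ - \ m\,d_{y_0}^{-m-1} \cdot {\bf \Delta} d_{y_0}$$
as signed measures on $X \setminus \{y_0\}$. Multiplying the comparison inequality by the strictly negative factor $-m\,r^{-m-1}$ reverses its direction, yielding, on $B_R(y_0) \setminus \{y_0\}$ for any $R \in (0,1)$,
$${\bf \Delta}(d_{y_0}^{-m}) \gs \Big( m(m+1)\,r^{-m-2} - m(N-1)\,r^{-m-2} - m\,C_{N,K}\,r^{-m-1} \Big) \cdot \meas.$$

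To finish, I use $r \ls R \ls 1$, which gives both $r^{-m-1} \ls r^{-m-2}$ and $r^{-m-2} \gs R^{-m-2}$. Collecting terms, the right-hand side is at least $m(m + 2 - N - C_{N,K})\,R^{-m-2}\,\meas$, which dominates $m^2 /(2 R^{m+2})\,\meas$ provided $m_{N,K}$ is chosen so that $m + 2 - N - C_{N,K} \gs m/2$ whenever $m \gs m_{N,K}$ (for instance, $m_{N,K} := 2(N + C_{N,K})$).

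The main technical point I anticipate is justifying the chain rule for ${\bf \Delta}$ cleanly in the presence of a nontrivial singular part of ${\bf \Delta} d_{y_0}$, and keeping the direction of the inequalities correct when multiplying by a factor of definite sign. The former can be handled by localizing to compact subsets of $X \setminus \{y_0\}$, on which $\phi(t) = t^{-m}$ is smooth and bounded so that the chain rule of \cite{Gig13} applies directly; the latter reduces, at the level of signed measures, to exploiting the strict negativity of $-m\,r^{-m-1}$ together with the fact that the comparison inequality holds in the sense of measures (so the singular part of ${\bf \Delta} d_{y_0}$ is already absorbed into the distributional upper bound).
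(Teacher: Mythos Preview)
Your proposal is correct and follows essentially the same approach as the paper's proof: both combine Gigli's Laplacian comparison with the chain rule for ${\bf \Delta}$ applied to $\phi(t)=t^{-m}$, use $|\nabla d_{y_0}|=1$ a.e., and then choose $m_{N,K}$ so that the quadratic growth in $m$ dominates the linear term. The only cosmetic difference is that the paper packages the comparison as $d_{y_0}\cdot{\bf \Delta}d_{y_0}\ls C_{N,K}$ (absorbing the $(N-1)/r$ term into the constant) and takes $m_{N,K}=\max\{2C_{N,K},1\}$, whereas you keep the two terms separate; your closing remarks on the singular part and the justification of the chain rule are, if anything, more careful than what the paper records.
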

\begin{proof}
From the Laplacian comparison, we have  $d_{y_0}(x) \in D({\bf \Delta}, X\setminus \{y_0\})$ and 
$$d_{y_0}\cdot {\bf \Delta }d_{y_0}\ls C_{N,K}\quad {\rm on }\quad B_1(y_0)\setminus \{y_0\}$$
in the sense of distributions, for some constant $C_{N,K}$ depending only on $N,K$.

 Notice that $d^{-m}_{y_0}$ is in $Lip_{\rm loc}(B_1(y_0)\setminus\{y_0\})$.  It follows from the Chain rule \cite{Gig13} that
\begin{equation*}
\begin{split}
{\bf \Delta }d^{-m}_{y_0}&=(-m)d_{y_0}^{-m-2}\cdot\big(d_{y_0}\cdot  {\bf \Delta }d_{y_0} -(m+1)|\nabla d_{y_0}|^2\big)\\
&\gs - m\big( C_{N,K} -(m+1) \big) d_{y_0}^{-m-2}\\
&=(m^2-mC_{N,K}+m)\frac{1}{d_{y_0}^{m+2}}\gs  \frac{m^2-mC_{N,K}+m }{R^{m+2}}\end{split}
\end{equation*}
 on $ B_{R}(y_0)\setminus \{y_0\}$ in the sense of distributions,  where we have used $d_{y_0}\ls R$ and $|\nabla d_{y_0}|=1$  almost everywhere in $B_{R}(y_0).$
  Now  the desired estimate holds provided  $m \gs m_{N,K}:=\max\{2C_{N,K},1\}$.
 \end{proof}

\section{Sobolev spaces for maps with metric targets and harmonic maps}

Let $\Omega$ be a bounded open domain of  an $RCD(K,N)$ space $(X,d,\meas)$ for some $K\in\mathbb R$ and $N\in [1,+\infty)$ such that $\meas(X\backslash \Omega)>0$, and let $(Y,d_Y)$ be a complete metric space.

 A Borel measurable map $u:\ \Omega\to Y$ is said to be in the space $L^2(\Omega,Y)$ if it has a separable range and, for some (hence, for all) $P\in Y$,
$$\int_\Omega d^2_Y\big(u(x),P\big)d\meas(x)<\infty.$$
We equip with a distance in $L^2(\Omega,Y)$ by 
$$d^2_{L^2}(u,v):=\int_\Omega d^2_Y\big(u(x),v(x)\big)d\meas(x) $$
 for all $  u,v\in L^2(\Omega,Y).$ 

Several notions of Sobolev energy for maps into metric targets have been introduced in \cite{KS93,Jost95,HKST01,KS03,Ohta04,GT21b}. We will recall the notions in \cite{KS93,KS03,GT21b}.
\begin{defn}
 Given a map $u\in L^2(\Omega,Y),$ for each $r>0$, the \emph{approximating energy } $E^{\Omega}_{2, r}[u]$ at scale $r$  is defined as a functional on $C_0(\Omega),$ 
 $$ E^{\Omega}_{2,r}[u](\phi):=\int_\Omega\phi\cdot {\rm ks}^2_{2,r}[u,\Omega]d\meas,$$ where ${\rm ks}_{2,r}[u,\Omega](x)$ is given in (\ref{equ-1.1}).
A map $u\in L^2(\Omega,Y) $ is called in $W^{1,2}(\Omega,Y)$ if 
$$E_2^{\Omega}(u):=\sup_{\phi\in C_0(\Omega),\ 0\ls \phi\ls 1}\Big(\limsup_{r\to 0} E^{\Omega}_{2,r}[u](\phi)\Big)<+\infty.$$
\end{defn}
 The following properties  have been proved in   \cite{GT21b}:
 \begin{enumerate}
 \item  if $u\in W^{1,2}(\Omega,Y)$,  there is a function $e_u:=e_2[u,\Omega]\in L^1(\Omega)$, called \emph{energy density}, such that 
    \begin{equation}\label{equ-3.1}
       E^\Omega_2(u):= \int_\Omega e_ud\meas,  
              \end{equation}         
       \item \indent If $Y=\mathbb R$, then the above  space $W^{1,2}(\Omega,\mathbb R)$
is equivalent to the   Sobolev space $W^{1,2}(\Omega)$ given in \S 2.    
 \item if  $u\in W^{1,2}(\Omega,Y)$ and $\phi\in Lip(Y,Z)$, then $\phi\circ u\in W^{1,2}(\Omega, Z)$. In particular, $d_Y(P,u(x)) \in W^{1,2}(\Omega) $ and $|\nabla d_Y(P,u(x))| \ls e^{1/2}_u$ almost everywhere.
\end{enumerate}

 Given any $w\in W^{1,2}(\Omega,Y)$,  we consider the minimizer of 
  $$\min_{u\in W^{1,2}_{w}(\Omega, Y)} E_2^{\Omega}(u),$$
  where $W^{1,2}_{w}(\Omega, Y)$ is given in (\ref{equ-1.2}).
  Such an energy-minimizing map is called a \emph{harmonic map}.

Let us recall the concept of non-positive curvature in the sense of Alexandrov. 
   
\begin{defn}[see, for example, \cite{BBI01,EF01}]\label{def-3.2}
A complete space $(Y,d_Y)$ is called to be a   $CAT(0)$ space if it is a geodesic space and if it holds the following comparison property. 
  Given any triangle $\triangle PQR\subset Y$  and point $S\in QR$ with
 $$d_Y(Q,S)=d_Y(R,S)= \frac{1}{2} d_Y(Q,R),$$
then for a  triangle $\triangle \bar P\bar Q\bar R$ in 2-dimensional plane $\mathbb R^2$  such that
$$ d_Y(P,Q)=|\bar P-\bar Q|,\quad d_Y(Q,R)=|\bar Q-\bar R|,\quad  d_Y(R,P)=|\bar R-\bar P|$$
and  a point $\bar S\in \bar Q\bar R$ with 
 $$|\bar Q-\bar S|=|\bar R-\bar S|=\frac{1}{2}|\bar Q-\bar R|,$$
we have 
$$d_Y(P,S)\ls |\bar P-\bar S|.$$
\end{defn}

It has been proved in \cite[Theorem 6.4]{GT21b} that when $Y$ is a $CAT(0)$-space, there always exists a (unique) harmonic map $u\in W^{1,2}_w(\Omega,Y)$ for any $w\in W^{1,2}(\Omega,Y)$.  Recently, Mondino-Semola  \cite{MS23} and Gigli \cite{Gig23}, independently, obtained the following interior regularity result. 
\begin{theorem}[\cite{MS23,Gig23}]\label{thm-3.3}
Let $\Omega $ be a bounded domain in an $RCD(K,N)$ space for some $K\in \mathbb R$ and $N\in [1,\infty)$ with $\meas(X\setminus\Omega)>0$, and let 
  Y be a $CAT(0)$ space. Suppose that $u:\Omega\to Y$ is a harmonic map. Then the following two properties are satisfied:
  
  (1) $u$ is locally Lipschitz continuous in $\Omega$, and moreover 
 \begin{equation}\label{cheng-yau}
 \sup_{x,y \in B_r(z_0),\ x\not=y}\frac{d_Y\big(u(x),u(y)\big)}{d(x,y)}\ls \frac{C_{N,K,{\rm diam}(\Omega)}}{r}\inf_{O\in Y}\sqrt {\fint_{B_{2r}(z_0)}d^2_Y\big(u(\cdot),O\big)d\meas}
   \end{equation}
for any ball $B_r(z_0)$ such that $B_{2r}(z_0)\subset\Omega$;

  (2) for any $P\in Y$  it holds
  \begin{equation}\label{equ-3.2}
  {\bf \Delta} d_Y\big(u(x),P\big)\gs 0 
  \end{equation}
  in the sense of distributions.  
  \end{theorem}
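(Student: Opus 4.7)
My plan is to treat part (2) first, since it is conceptually simpler and will serve as an ingredient for part (1). For the subharmonicity in (2), I would use a first-variation argument. Fix $P\in Y$ and a non-negative $\phi\in Lip_0(\Omega)$. For each $x\in\Omega$ let $\gamma_x:[0,1]\to Y$ be the unique constant-speed geodesic from $u(x)$ to $P$ (unique because $Y$ is $CAT(0)$), and for small $s>0$ define the competitor
$$u_s(x):=\gamma_x\big(s\phi(x)\big).$$
Since $\phi$ is compactly supported, $u_s\in W^{1,2}_w(\Omega,Y)$. Applying the convexity of $d_Y$ along pairs of geodesics sharing the endpoint $P$ (together with its localized version that handles the unequal sliding parameters $s\phi(x)$ and $s\phi(y)$) one obtains a pointwise bound on $d_Y^2(u_s(x),u_s(y))$ from which I would compute
$$\lim_{s\to 0^+}\frac{E_2^\Omega(u_s)-E_2^\Omega(u)}{s}=-2\int_\Omega\ip{\nabla\phi}{\nabla d_Y(u(\cdot),P)}\,d\meas.$$
Minimality of $u$ forces this derivative to be non-negative, which, via Definition \ref{def-2.3}, is exactly the distributional inequality $\mathbf{\Delta}d_Y(u(\cdot),P)\gs 0$ of (\ref{equ-3.2}).

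For part (1), the plan is a Bochner-to-Moser strategy. The core step is to establish a Bochner-type inequality for the energy density $e_u$ of the form
$$\mathbf{\Delta}e_u\gs -C(K)\,e_u\qquad\text{as measures on }\Omega,$$
in which the source-side contribution comes from the $RCD(K,N)$ Bochner inequality (\ref{equ-2.2}) applied to the scalar functions $f_P:=d_Y(u(\cdot),P)$, and the target-side contribution, which in the smooth Eells--Sampson identity is the sectional curvature of $Y$, has a favorable sign because $Y$ is $CAT(0)$. To make this rigorous in the singular setting I would either regularize $u$ by the gradient flow of its Korevaar-Schoen energy and propagate a heat-regularized Bochner inequality to the limit, or work directly within Gigli's second-order $RCD$ calculus, viewing $e_u$ essentially as a ``supremum of $|\nabla f_P|^2$ over $P\in Y$'' and summing the resulting Bochner contributions while exploiting part (2) to handle the $P$-dependence.

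Once the Bochner inequality is in hand, the Lipschitz bound follows from a standard Cheng-Yau-type sequence on balls $B_r(z_0)$ with $B_{2r}(z_0)\subset\Omega$: first, a Caccioppoli estimate
$$\int_{B_r(z_0)}e_u\,d\meas\ls \frac{C}{r^2}\inf_{O\in Y}\int_{B_{2r}(z_0)}d_Y^2(u,O)\,d\meas,$$
obtained by testing the minimality of $u$ against the competitor interpolating along geodesics between $u$ and the constant map equal to $O$, cut off by a radial cutoff of $X$; second, a Moser iteration based on the above Bochner inequality, the volume doubling property, and the $L^2$-Poincar\'e inequality on $\overline{B_{2r}(z_0)}$, yielding the $L^\infty$ bound $\sup_{B_r(z_0)}e_u\ls Cr^{-2}\inf_O\fint_{B_{2r}(z_0)}d_Y^2(u,O)\,d\meas$; third, an application of item (4) of Definition \ref{def-2.1} to the $1$-Lipschitz family $\{f_P\}_{P\in Y}$, whose slopes are pointwise dominated by $e_u^{1/2}$, together with a chaining argument along a curve joining arbitrary $x,y\in B_r(z_0)$ in $X$, to convert the $L^\infty$ bound on the energy density into the pointwise Lipschitz estimate (\ref{cheng-yau}).

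The main obstacle is the rigorous derivation of the Bochner inequality for $e_u$. The energy density is only defined $\meas$-a.e.\ through the asymptotic behavior of the approximating energies, its distributional Laplacian is not directly accessible, and the correct formulation of the target-curvature contribution for metric-valued maps requires either the module-theoretic second-order framework of Gigli or the delicate approximation scheme of Mondino-Semola. Securing the correct sign on that contribution, where the $CAT(0)$ hypothesis must eliminate an otherwise sign-indefinite Riemann-type term, is the technical heart of the matter and is where the target geometry is essentially used.
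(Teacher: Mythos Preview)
Your proposal and the paper's proof diverge sharply in scope. The paper treats part (1) entirely by citation to \cite{MS23,Gig23}; it does not reproduce any Bochner--Moser argument. Your outline for (1) is in the spirit of what those cited papers do, but there is nothing in the present paper to compare it against.

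For part (2) the two approaches are genuinely different. The paper does \emph{not} run a first-variation argument. Instead it takes as input the already-cited inequality ${\bf\Delta}\,d_Y^2(u(\cdot),P)\geqslant 2e_u$ (from \cite{Jost97,MS23,Gig23}) together with $|\nabla d_Y(u(\cdot),P)|^2\leqslant e_u$, sets $f_\epsilon:=\big(d_Y^2(u(\cdot),P)+\epsilon^2\big)^{1/2}$, and uses the chain rule to obtain $f_\epsilon\,{\bf\Delta}f_\epsilon\geqslant e_u-|\nabla f_\epsilon|^2\geqslant 0$. Since $f_\epsilon\geqslant\epsilon$ is continuous, this yields ${\bf\Delta}f_\epsilon\geqslant 0$, and a weak-$W^{1,2}_{\rm loc}$ limit as $\epsilon\to 0$ gives (\ref{equ-3.2}). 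The point of the regularization is precisely to bypass the singularity of $\sqrt{\,\cdot\,}$ at the zero set $\{u=P\}$, which a naive chain rule cannot handle.

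Your first-variation route is in principle viable, but the displayed identity is not right as stated. With $\gamma_x:[0,1]\to Y$ the constant-speed geodesic from $u(x)$ to $P$, the initial velocity of $s\mapsto u_s(x)=\gamma_x(s\phi(x))$ has magnitude $\phi(x)\,d_Y(u(x),P)$, not $\phi(x)$; the resulting first variation therefore carries a factor of $\rho:=d_Y(u(\cdot),P)$ and naturally produces a statement about $d_Y^2$ rather than $d_Y$. Moreover, the $CAT(0)$ comparison only gives an \emph{upper} bound on $d_Y(u_s(x),u_s(y))$, so the one-sided derivative of the energy satisfies an inequality, not the equality you wrote. If you correct these two points you essentially recover ${\bf\Delta}\,d_Y^2(u,P)\geqslant 2e_u$, and you are then still left with the passage from $d_Y^2$ to $d_Y$ --- which is exactly what the paper's $\epsilon$-regularization accomplishes.
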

\begin{proof}
The conclusion (1) is proved in \cite{MS23,Gig23}, and the explicit estimate (\ref{cheng-yau}) is in \cite{Gig23}.

To show (2), we fix any $\epsilon>0$ and put
$$f_\epsilon(x):=(d^2_Y\big(u(x),P\big)+\epsilon^2)^{1/2}\gs \epsilon.$$
Then we have $f^2_\epsilon=d^2_Y\big(u(x),P\big)+\epsilon^2$,
\begin{equation}\label{equ-3.4}
 |\nabla f_\epsilon|=\frac{d_Y\big(u(x),P\big)}{f_\epsilon}|\nabla d_Y\big(u(x),P\big)|\ls |\nabla d_Y\big(u(x),P\big)|
 \end{equation}
almost everywhere in $\Omega$, and, by the chain rule \cite{Gig13}, 
$$2f_\epsilon\cdot {\bf{\Delta}} f_\epsilon+2|\nabla f_\epsilon|^2={\bf \Delta} f^2_\epsilon={\bf\Delta} d^2_Y\big(u(x),P\big) $$
in the sense of distributions. 
Thus, by combining with 
  \begin{equation*} 
{\bf \Delta} d^2_Y\big(u(x),P\big)\gs 2e_u
  \end{equation*}
as measures (see \cite{Jost97,MS23,Gig23}),  and   $|\nabla d_Y(P,u(x))|^2\ls e_u$, we have
    \begin{equation*} 
f_\epsilon\cdot {\bf \Delta}f_\epsilon \gs0
  \end{equation*}
as measures. Notice that   $d_Y\big(u(x),P\big)   \in C(\Omega)$  (see \cite{Jost97,MS23,Gig23}). Hence, we  have   $f_\epsilon\in C(\Omega)$ and $f_\epsilon\gs\epsilon$. Therefore, we conclude ${\bf \Delta}f_\epsilon \gs0$ as measures for any $\epsilon>0$. That is,
\begin{equation}\label{equ-3.5}
-\int_\Omega\ip{\nabla  f_\epsilon}{\nabla \phi }d\meas\gs 0,\quad \forall \phi\in Lip_0(\Omega),\ \ \phi\gs0.
\end{equation}
From the fact $f_\epsilon^2=d^2_Y\big(u(x),P\big)+\epsilon^2$ and (\ref{equ-3.4}), we know that $\{f_\epsilon\}_{0<\epsilon<1}$ is uniformly bounded  in $W^{1,2}_{\rm loc}(\Omega)$. (i.e. for each $\Omega' \subset\subset \Omega$, $\{f_\epsilon\}_{0<\epsilon<1}$ is bounded in $W^{1,2}(\Omega')$.) Then there exists a sequence of numbers  $\{\epsilon_j\}_{j\in\mathbb N}$, $\epsilon_j\to0^+$, such that $f_{\epsilon_j}$ converges weakly to $d_Y\big(u(x),P\big)$ in $W_{\rm loc}^{1,2}(\Omega)$. Letting now $\epsilon_j\to 0^+$ in (\ref{equ-3.5}), we conclude that ${\bf \Delta}d_Y\big(u(x),P\big)\gs 0$ in the sense of distributions. 
\end{proof}

 \section{Boundary regularity  of harmonic maps}
 We will give a proof of Theorem \ref{thm-1.5} in this section. Let $\Omega $ be a bounded domain of an $RCD(K,N)$ metric measure
space $(X,d,\meas)$  for some $K\in \mathbb R$ and $N\in [1,\infty)$ with $\meas(X\setminus \Omega)>0$.
 Without loss of the generality, we assume always $K\ls0$. 
 
 \subsection{Boundary estimates for harmonic functions}
In this subsection, we will deal with a special case, $Y=\mathbb R$, i.e., the harmonic function.
We need a simple lemma as follows. 
 
 \begin{lemma}\label{lem-4.1}
 Let $f\in L^2(X)$ be a Lipschitz function with $|\nabla f|\ls L$ for some $L>0$. There exists a constant $C_1=C_1(N,K)\gs 0$, depending only on $N,K$, such that    
  \begin{equation}\label{equ-4.1}
 |\nabla  H_tf|(z)\ls C_1  L, 
 \end{equation} \begin{equation}\label{equ-4.2}
 |\Delta H_tf|(z)\ls C_1  Lt^{-1/2},
 \end{equation}
  \begin{equation}\label{equ-4.3}
 |  H_tf(z)-f(z)|\ls C_1  Lt^{1/2}
 \end{equation} for almost all $z\in X$ and all $0<t\ls 1.$ 
  \end{lemma}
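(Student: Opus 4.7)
The plan is to extract all three bounds from a single source, the Bakry--Ledoux gradient estimate (\ref{equ-2.4}) applied to $f$. The crucial pointwise input is $H_t(|\nabla f|^2)\leq L^2$ almost everywhere, which follows from $|\nabla f|\leq L$ a.e., the positivity of the heat semigroup, and stochastic completeness (\ref{equ-2.3}).

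For (\ref{equ-4.1}), I would discard the nonnegative $|\Delta H_tf|^2$-term in (\ref{equ-2.4}) and take square roots to get $|\nabla H_tf|^2\leq e^{-2Kt}L^2$; since $K\leq 0$ and $t\in(0,1]$, the factor $e^{-2Kt}$ is bounded by $e^{-2K}$. For (\ref{equ-4.2}), I would instead drop the $|\nabla H_tf|^2$-term in (\ref{equ-2.4}) and rearrange to isolate the Laplacian, obtaining
\[
|\Delta H_tf|^2 \leq \frac{N(e^{2Kt}-1)}{4Kt^2}\, e^{-2Kt}L^2.
\]
Using the elementary bound $(e^{2Kt}-1)/(2Kt)\leq e^{2|K|t}$, the prefactor collapses to $Ne^{4|K|t}/(2t)$, which for $t\in(0,1]$ is at most $C(N,K)/t$, giving (\ref{equ-4.2}).

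For (\ref{equ-4.3}), the idea is to integrate (\ref{equ-4.2}) in time via the semigroup identity $H_tf-f=\int_0^t \Delta H_s f\,ds$: this yields
\[
|H_tf(z)-f(z)|\leq \int_0^t |\Delta H_sf|(z)\,ds\leq C_1L\int_0^t s^{-1/2}\,ds=2C_1L\sqrt{t}.
\]
Combining the three constants produces a single $C_1=C_1(N,K)$ as claimed.

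The one nontrivial point is making the pointwise evaluation in the argument for (\ref{equ-4.3}) rigorous, since the bound (\ref{equ-4.2}) only holds a.e.\ in $z$ for each $s$. I would handle this by working with the canonical Lipschitz representative of $H_sf$, which exists by (\ref{equ-4.1}) together with axiom (4) of Definition~\ref{def-2.1}, interpreting $s\mapsto H_sf$ as a curve in $L^2(X)$ that is differentiable on $(0,\infty)$ with derivative $\Delta H_sf$, integrating in the Bochner sense, and then evaluating at a point $z$ belonging to a full-measure set on which the pointwise inequality $|\Delta H_s f(z)|\leq C_1L s^{-1/2}$ holds for a.e.\ $s\in(0,t]$ (via Fubini applied to the a.e.\ bound in $(s,z)$).
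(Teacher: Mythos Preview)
Your proposal is correct and follows essentially the same route as the paper: both arguments feed the a.e.\ bound $H_t(|\nabla f|^2)\le L^2$ into the Bakry--Ledoux estimate (\ref{equ-2.4}), drop one of the two nonnegative terms on the left to obtain (\ref{equ-4.1}) and (\ref{equ-4.2}) respectively, and then integrate the Laplacian bound in time via $H_tf-f=\int_0^t \Delta H_sf\,ds$ to get (\ref{equ-4.3}). The only cosmetic differences are that the paper phrases the first input as ``$L^\infty$-contraction of $H_t$'' and controls the prefactor $\frac{N(e^{2Kt}-1)}{4Kt^2}$ using $\frac{e^s-1}{s}\le 1$ for $s=2Kt\le 0$ (equivalently $e^s-1\ge s$ divided by $s<0$), whereas you use the looser $\frac{e^s-1}{s}\le e^{|s|}$; your extra care with the a.e.\ issue in the time integration is a nice addition that the paper leaves implicit.
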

 
 \begin{proof} Since $f\in L^2(X)$, we know $H_tf\in D(\Delta)$. 
 From the Bakry-Ledoux's estimate (\ref{equ-2.4}) and the $L^\infty(X)$-contraction of $H_t$, we have
 $$|\nabla H_tf|^2\ls e^{-2K}H_t(|\nabla f|^2)\ls e^{-2K}L^2$$
and 
 $$|\Delta H_tf|^2\ls \frac{N(e^{2Kt}-1)}{4Kt^2} H_t(|\nabla f|^2)\ls \frac{N(e^{2Kt}-1)}{4Kt^2} L^2$$
 for almost all  $z\in X$ and all $0<t\ls1$. It follows  the desired estimate (\ref{equ-4.1}) and (\ref{equ-4.2}), by using  an elementary inequality  
 $$ e^s-1 \ls  s,\quad \forall s\in \mathbb R$$
and letting $s= 2Kt.$ (keeping in mind $K\ls0$). 
Finally, we have   (\ref{equ-4.3})  by
$$|H_tf(z)-f(z)|\ls \int_0^t\left|\frac{\partial H_\tau f(z)}{\partial s}\right|d\tau=\int_0^t\left|\Delta H_\tau f(z) \right|d\tau\ls  CLt^{1/2}$$
for almost all $z\in X$ and $0<t\ls1$.
The proof is finished.
\end{proof}
 
We consider the growth of a harmonic function near the boundary.
\begin{lemma}\label{lem-4.2}
Let $\Omega\subset X$ be a bounded domain and satisfy a uniformly exterior condition with constant $R_0$. Suppose that $f\in C(\overline{\Omega})\cap W^{1,2}(\Omega)$ is a harmonic function on $\Omega$ with boundary data $g\in Lip(\overline{\Omega})$. Suppose that $g$ has a Lipschitz constant $L$ and that $g(z_0)=0$ for some $z_0\in \overline\Omega.$   Given any $\epsilon\in (0,1)$, there exists $R_\epsilon \in(0,\frac{1}{2}\min\{R_0,1\})$ (depending only on  $\epsilon, N,K$ and ${\rm diam}(\Omega)$) such that  
 \begin{equation}\label{equ-4.4}
M_{x_0}(r)\ls \max\big\{C_2 M_{x_0}(2r^{\frac{1}{1+\epsilon}})\cdot r^{\frac{\epsilon}{1+\epsilon}} +C_2Lr, \ Lr^{1-\epsilon}\big\}
\end{equation}
for all  $r\in(0,R_\epsilon)$ and for all $x_0\in\partial\Omega$,
where 
$$M_{x_0}(r):=\max\big\{\sup_{x\in B_r(x_0)\cap \Omega} \left(f(x)-f(x_0)\right),\  L r^{1-\epsilon}\big\},$$
and the constant $C_2\gs 1$ depends only on $N,K$ and ${\rm diam}(\Omega)$.
 \end{lemma}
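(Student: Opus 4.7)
\emph{Setup.} The plan is to combine heat-flow smoothing of $g$ with the exterior-sphere barrier from Lemma \ref{lem-2.5}, comparing $f$ against a carefully built super-solution on $D := B_R(x_0) \cap \Omega$, where $R := 2r^{1/(1+\epsilon)}$. I reduce to the nontrivial case $M_{x_0}(r)>Lr^{1-\epsilon}$, and choose $R_\epsilon$ so that $R < \min\{R_0, 1\}$ when $r < R_\epsilon$. The identity $r^{\epsilon/(1+\epsilon)} = 2r/R$ then shows the desired bound is equivalent to the Hopf-type inequality $M_{x_0}(r) \ls C\bigl(M_{x_0}(R)(r/R) + Lr\bigr)$.

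\emph{Heat smoothing and decomposition.} Extend $g$ to a Lipschitz function on $X$ (still with constant $L$) via McShane. Set $\tilde g := H_t g$ with $t := R^2/4$. Lemma \ref{lem-4.1} gives $|\nabla \tilde g| \ls C_1 L$, $|\Delta \tilde g| \ls 2 C_1 L/R$, and $\|\tilde g - g\|_\infty \ls C_1 L R/2$. Define $v := (f - f(x_0)) - (\tilde g - \tilde g(x_0))$, which satisfies $\Delta v = -\Delta \tilde g$ in $\Omega$, with $v \ls M_{x_0}(R) + C_1 L R$ on $\partial B_R(x_0) \cap \overline\Omega$ and $|v| \ls 2 C_1 L R$ on $\partial\Omega \cap \overline{B_R(x_0)}$. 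Split $v = v_1 + v_2$ on $D$, where $v_2$ solves the Dirichlet problem ${\bf \Delta} v_2 = -\Delta \tilde g$ with $v_2 \in W^{1,2}_0(D)$ and $v_1 := v - v_2$ is harmonic on $D$. A standard Poisson estimate on $D$ (via the $L^2$-Poincar\'e inequality and comparison with the torsion function of $D$) yields $\|v_2\|_{L^\infty(D)} \ls C \|\Delta \tilde g\|_\infty \cdot R^2 \ls C L R$, so the harmonic function $v_1$ has boundary data bounded by $M_{x_0}(R) + CLR$ on $\partial D$.

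\emph{Outer barrier and maximum principle.} By the exterior-sphere condition, pick $y_0 \in X$ with $B_{R_0}(y_0) \cap \Omega = \emptyset$ and $x_0 \in \partial B_{R_0}(y_0)$; Lemma \ref{lem-2.5} furnishes the barrier $\beta_m(x) := R_0^{-m} - d_{y_0}(x)^{-m}$ with ${\bf\Delta} \beta_m \ls -C_3$ on $D$, $\beta_m(x_0) = 0$, and $\beta_m(x) \ls C_4\, d(x, x_0)$ by the mean value theorem. I then construct a super-harmonic outer barrier $\xi$ on $D$ with $\xi \gs 1$ on $\partial B_R(x_0) \cap \overline\Omega$, $\xi \gs 0$ on $\partial\Omega \cap \overline{B_R(x_0)}$, and the key pointwise decay $\xi(x) \ls C r/R$ for $x \in B_r(x_0) \cap \Omega$. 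The maximum principle gives $v_1 \ls (M_{x_0}(R) + CLR)\,\xi$ on $D$, so for $x \in B_r(x_0) \cap \Omega$ one has
\[
v(x) \ls (M_{x_0}(R) + CLR)(r/R) + CLR.
\]
Adding $|\tilde g(x) - \tilde g(x_0)| \ls C_1 L r$ and absorbing $LR \ls 2^{1+\epsilon} M_{x_0}(R)(r/R)$ (from the floor $M_{x_0}(R) \gs L R^{1-\epsilon}$) yields $f(x) - f(x_0) \ls C M_{x_0}(R)(r/R) + CLr$, which rewrites as the announced inequality.

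\emph{Main obstacle.} The delicate step is the construction of the outer barrier $\xi$ with the linear Hopf-type decay $\xi(x) \ls C(r/R)$ near $x_0$. Applying Bj\"orn's Lemma \ref{lem-2.4} alone to a harmonic function with Lipschitz interpolating boundary data yields only $\xi(x) \ls C(r/R)^{\beta_0}$ for some small $\beta_0 > 0$, which is too weak. The exterior sphere barrier $\beta_m$ provides linear decay in the radial direction from $y_0$ but vanishes on the tangency set $\partial B_{R_0}(y_0) \cap \partial B_R(x_0)$, so a careful combination of $\beta_m$ with either iterated Bj\"orn-type estimates across dyadic intermediate scales, or with the interior Lipschitz regularity of Theorem \ref{thm-3.3}, is needed to extract the linear decay in all directions toward $x_0$.
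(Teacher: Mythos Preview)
Your proposal contains a genuine gap precisely at the step you flag as the ``main obstacle'': the construction of the super-harmonic barrier $\xi$ on $D=B_R(x_0)\cap\Omega$ with $\xi\gs 1$ on $\partial B_R(x_0)\cap\overline\Omega$, $\xi\gs 0$ on $\partial\Omega\cap\overline{B_R(x_0)}$, and the linear decay $\xi(x)\ls C\,d(x,x_0)/R$. As you yourself observe, the fixed exterior-sphere barrier $\beta_m(x)=R_0^{-m}-d_{y_0}^{-m}(x)$ vanishes along the tangency set $\partial B_{R_0}(y_0)\cap\partial B_R(x_0)$, so no rescaling of $\beta_m$ can satisfy the first requirement; and Bj\"orn's Lemma~\ref{lem-2.4} only yields $\xi(x)\ls C(d(x,x_0)/R)^{\beta_0}$. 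Your suggested remedies (dyadic iteration of Bj\"orn, or injection of interior Lipschitz bounds) are not developed and it is unclear they can be made to yield the sharp linear rate; without $\xi$ the argument does not close. A secondary issue: your Poisson bound $\|v_2\|_{L^\infty(D)}\ls C\|\Delta\tilde g\|_\infty R^2$ also requires a barrier with $\Delta w\ls -1$ and $w\ls CR^2$ on $D$, which on an $RCD$ space is not free for the irregular domain $D$ and in fact reduces to the same exterior-ball construction.

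The paper resolves both difficulties simultaneously with two moves you are missing. First, it \emph{recenters the comparison domain at a scaled exterior point}: using that $X$ is geodesic, from the exterior sphere one finds $y_0$ with $d(y_0,\partial\Omega)=d(y_0,x_0)=R/2$, and works on $\Omega'=\Omega\cap B_R(y_0)$ rather than on $B_R(x_0)\cap\Omega$. The barrier $h(x)=B\big((R/2)^{-m}-d_{y_0}^{-m}(x)\big)$ on $B_R(y_0)$ then automatically satisfies $h(x_0)=0$, $h\gs 0$ on $\Omega'$, $h\equiv B(2^m-1)R^{-m}$ uniformly on $\partial B_R(y_0)\cap\overline\Omega$, and $|\nabla h|\ls CB/R^{m+1}$, so the linear decay at $x_0$ comes for free with no tangency degeneration. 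Second, the paper \emph{does not split} $v=v_1+v_2$: it keeps $v_t=f_t-H_t\tilde g$ (which has $\Delta v_t\gs -C_1Lt^{-1/2}$ and $v_t|_{\partial\Omega}=0$) and chooses $t$ \emph{adaptively} so that the super-harmonic defect of $h$ matches this bound, namely $B\,m^2/(2R^{m+2})=C_1Lt^{-1/2}$ with $B=4M_{x_0}(2R)R^m/2^{m-1}$. The maximum principle then gives $v_t\ls h$ on $\Omega'$ directly, and $|\nabla h|\ls CM_{x_0}(2R)/R$ yields the desired $v_t(x)\ls CM_{x_0}(2R)\,d(x,x_0)/R$; the choice $r=R^{1+\epsilon}$ finishes. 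Your fixed choice $t=R^2/4$ forces the separate Poisson estimate and the separate harmonic barrier, neither of which you can supply; the adaptive coupling of $t$ to $M_{x_0}(2R)$ is what makes the single barrier suffice.
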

 
 \begin{proof}
 Let $\tilde g$ be a Lipschitz extension of $g$ on $X$ with $\tilde g\in L^2(X)$ and the same Lipschitz constant $L$. Let $H_t\tilde g$ be the heat flow with $H_0\tilde g=\tilde g$. 
 For any $t\in(0,1)$,  we solve the Dirichlet  problems
 $$
 {\bf \Delta} f_t =0\quad {\rm on }\quad \Omega\quad {\rm with}\quad f_t-H_t\tilde g\in W^{1,2}_0(\Omega).$$
Since $\Omega$ satisfies a uniformly exterior sphere condition, in particular, an exterior density condition, we know from Lemma \ref{lem-2.4}  that $f, f_t\in C(\overline{\Omega}).$
 Notice that $f_t-f$ is harmonic on $\Omega$ with the boundary value $H_t\tilde g-g$. It follows
$$|f_t-f|\big|_{\partial \Omega}=|H_t\tilde g-\tilde g|\big|_{\partial\Omega}\ls C_1Lt^{1/2}$$
for all $t\in (0,1)$, where $C_1$ is given in Lemma \ref{lem-4.1}.  The maximum principle yields that 
\begin{equation}
\label{equation+1}
\left| f_t-f\right|\ls C_1Lt^{1/2}\quad \forall x\in\Omega,\ \ \forall t\in(0,1).
\end{equation} 
Without loss of the generality, we can assume that $C_1\gs\max\{1, {\rm diam}(\Omega)\}$. 
Here and in the sequel of this proof, we denote $C_1, C_2,\cdots, $ as constants depending only on $N, K$ and ${\rm diam}(\Omega)$. 

 Putting 
\begin{equation}\label{equ-4.5}
v_t=f_t-H_t\tilde g\quad {\rm on}\ \Omega,
\end{equation}
it follows that  $v_t\in C(\overline{\Omega})$ for all $t\in (0,1)$, and for any $x\in B_{2R}(x_0)\cap {\overline\Omega}$, 
$$v_t(x)\ls f(x)-g(x)+2C_1Lt^{1/2}\ls M_{x_0}(2R)+2LR+2C_1Lt^{1/2},$$
where we have used the definition of $M_{x_0}(2R)$, (\ref{equation+1}), (\ref{equ-4.3}), $g(x_0)=f(x_0)$,  and that $g$ is $L$-Lipschitz on $\Omega$. From the definition of $M_{x_0}(2R)$, we have $M_{x_0}(2R)\gs L(2R)^{1-\epsilon}\gs 2L R$ provided $2R\ls 1$.
 Therefore,    for any $x\in B_{2R}(x_0)\cap {\overline\Omega}$ and $t<1$ we have
\begin{equation}\label{equ-4.6}
v_t(x)\ls 2 M_{x_0}(2R)+2C_1Lt^{1/2} 
\end{equation}
provided $2R\ls 1$.
 
  Fix some $m\gs m_{N,K}$ given in Lemma \ref{lem-2.5} such that $\frac{2^m\cdot C_1^2}{m^2}\gs 8$, (say, for example, $m=\max\{m_{N,K},12\}$ and notice that $C_1\gs1$). Let $\tilde R_\epsilon$ be determined by
\begin{equation}\label{equ-4.7}
\frac{4}{C^2_1 \tilde R^{2\epsilon}_\epsilon}=\frac{2^m}{m^2}.\quad \qquad ({\rm It\ implies}\ \ 2\tilde R_{\epsilon}\ls 1.)
\end{equation}

For any $R<\tilde R_\epsilon$, we choose $t$ such that
 \begin{equation}\label{equ-4.8}
\frac{4M}{R^2}\cdot \frac{m^2}{2^m}=C_1Lt^{-1/2},\qquad {\rm and \ let}\quad B=\frac{4M R^m}{2^{m-1}},
 \end{equation}
 where $M:=M_{x_0}(2R).$ 
  Noticing that $\frac{R}{M}\ls  \frac{R}{L(2R)^{1-\epsilon}} \ls \frac{R^\epsilon}{L}$, we have
\begin{equation}\label{equ-4.9}
t^{1/2}=\frac{C_1LM}{4}\cdot\frac{R^2}{M^2}\cdot\frac{2^m}{m^2}\ls \frac{M}{C_1L}\left(\frac{R^{2\epsilon}}{\tilde R_\epsilon^{2\epsilon}}\right)  \ls \frac{M}{C_1L}.
\end{equation}
We first check that   $t\ls 1$. Indeed,  it follows from  $C_1\gs \max\{1,{\rm diam}(\Omega)\}$ and $M=M(2R)\ls\max\{{\rm osc}_{\overline\Omega} f,L\}\ls \max\{{\rm osc}_{\overline\Omega} g,L\}\ls L\cdot\max\{ {\rm diam}(\Omega),1\}$ (since the maximal principle and $g(z_0)=0$ for some $z_0\in \overline\Omega$).

Since $\Omega$ satisfies a uniformly exterior sphere condition (and $X$ is a geodesic space), for any $R\in(0,R_0/2)$, there exists a point $y_0$ such that 
$$d(y_0,\partial\Omega)=d(y_0,x_0)=R/2.$$
We consider a  barrier 
$$h(x)=B\big( (R/2)^{-m}-d_{y_0}^{-m}(x)\big)$$
on $B_R(y_0).$ From Lemma \ref{lem-2.5} and (\ref{equ-4.8}), we have 
$${\bf \Delta}h\ls -B\frac{m^2}{2R^{m+2}}=-\frac{4MR^m}{2^{m-1}}\cdot \frac{m^2}{2R^{m+2}}=-\frac{4M}{R^2}\cdot\frac{m^2}{2^m}=-C_1Lt^{-1/2}$$
on $B_R(y_0).$ Consider   $\Omega':=\Omega\cap B_{R}(y_0)$, then $\partial \Omega'=\Gamma_1\cup\Gamma_2$, where $\Gamma_1=\partial B_{R}(y_0)\cap \overline{\Omega}$ and $\Gamma_2=\partial \Omega\cap \overline{B_{R}(y_0)}.$ It follows from (\ref{equ-4.6}) and (\ref{equ-4.9}) that $$ v_t\ls 4M\ls \frac{2^m-1}{2^{m-1}}\frac{4MR^m}{R^m}=h\quad {\rm on}\ \Gamma_1,$$
   and
 $$v_t=0\ls h\quad {\rm on}\ \Gamma_2.$$
By (\ref{equ-4.2}), we have
  $${\bf \Delta} v_t=-\Delta H_t\tilde g\gs -C_1Lt^{-1/2},\quad v_t|_{\partial \Omega}=0$$ for all $t\in(0,1)$.
Using the maximum principle, we conclude that $v_t\ls h$ on $\Omega'.$ Hence, noticing that $h(x_0)=0$, we have 
\begin{equation}\label{equ-4.10}
v_t(x)\ls h(x)\ls d(x_0,x)\cdot |\nabla h|_{L^\infty(\Omega')}  \ls d(x_0,x)\cdot \frac{mB}{(R/2)^{m+1}}= 16Mm \frac{d(x,x_0)}{R}
\end{equation}
for all $x\in B_{R/2}(x_0)\cap \Omega$. Now for any $r<R/2$ and $x\in B_r(x_0)$, we conclude that
\begin{equation*}
\begin{split}
f(x)&\ls f_t(x)+C_1Lt^{1/2}=v_t(x)+H_t\tilde g(x)+ C_1Lt^{1/2}\\
&\ls 16m M\frac{r}{R}+g(x)+2C_1Lt^{1/2}\\
&\ls 16m M\frac{r}{R}+Lr+g(x_0)+2\frac{(C_1L)^2R^2}{4M}\cdot \frac{2^m}{m^2},
\end{split}
\end{equation*}
where we have used (\ref{equation+1}), (\ref{equ-4.3}), (\ref{equ-4.10}), (\ref{equ-4.5}) and the Lipschitz continuous of $g$. Therefore, by using $f(x_0)=g(x_0)$ and $R/M\ls R^\epsilon/L$, we conclude that
\begin{equation*}
M(r)\ls 16m \frac{M}{R}r+Lr+C_3 LR^{1+\epsilon}
\end{equation*}
or $M(r)\ls L r^{1-\epsilon}$ for any $r<R/2$, where $C_3=C^2_1 2^{m-1}/(m^2)$. Now we choose $r=R^{1+\epsilon}$ to conclude the desired (\ref{equ-4.4}) with $C_2=\max\{16m,C_3+1\}$.   To ensure $r<R/2$, we assume $r<2^{-1/(\epsilon+\epsilon^2)}$. The constant 
$$R_\epsilon:=\min\big\{ \tilde R_\epsilon^{1/(1+\epsilon)},\ (R_0/2)^{1/(1+\epsilon)},\ 2^{-1/(\epsilon+\epsilon^2)}\big\},$$
where $\tilde R_\epsilon$ is given in (\ref{equ-4.7}).
 This finishes the proof.
   \end{proof}

The main result in this subsection is given as follows.
\begin{theorem}\label{thm-4.3}
Let $\Omega\subset X$ be a bounded domain and satisfy a uniformly exterior condition with constant $R_0$. Suppose that $f\in   W^{1,2}(\Omega)$ is a harmonic function on $\Omega$ with boundary data $g\in Lip(\overline{\Omega})$. Suppose $g(z_0)=0$ for some $z_0\in\overline\Omega$. Then for any $\epsilon\in(0,1)$, there exists a number $R'_\epsilon\in (0,\frac 1 2\min\{1,R_0 \})$ (depending only on $\epsilon, N, K$ and ${\rm diam}(\Omega)$) such that for any ball $B_r(x_0)$ with  $x_0\in\partial \Omega$  and $r\in (0,R'_\epsilon)$ it holds 
\begin{equation}\label{equ-4.12}
 \sup_{B_r(x_0)\cap \Omega} |f(x)-f(x_0)|\ls C_{\epsilon}L\cdot r^{1-\epsilon}, 
 \end{equation}
  where the constant $C_{\epsilon}>0$ depends only on $\epsilon, N,K$, ${\rm diam}(\Omega)$, and the constant $L$ is a Lipschitz constant of $g$.  
   \end{theorem}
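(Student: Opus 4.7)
The plan is to iterate Lemma \ref{lem-4.2} after normalizing by the desired power $r^{1-\epsilon}$. By Lemma \ref{lem-2.4} applied with H\"older exponent $\alpha=1$, we have $f\in C(\overline\Omega)$, so $f(x_0)$ is well-defined and equals $g(x_0)$. Since $-f$ is harmonic with boundary data $-g\in Lip(\overline\Omega)$ having the same Lipschitz constant $L$ and still satisfying $-g(z_0)=0$, it suffices to prove the one-sided estimate $M_{x_0}(r)\ls C_\epsilon L r^{1-\epsilon}$ for the quantity $M_{x_0}$ of Lemma \ref{lem-4.2}; applying the same estimate to $-f$ then yields the two-sided bound (\ref{equ-4.12}).

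Using $\max\{A,B\}\ls A+B$ in Lemma \ref{lem-4.2}, for any $r\in(0,R_\epsilon)$,
\begin{equation*}
M_{x_0}(r)\ls Lr^{1-\epsilon}+C_2 M_{x_0}\bigl(2r^{1/(1+\epsilon)}\bigr)\,r^{\epsilon/(1+\epsilon)}+C_2 Lr.
\end{equation*}
Set $\phi(r):=M_{x_0}(r)/(Lr^{1-\epsilon})$ and use the identities $\tfrac{1-\epsilon}{1+\epsilon}+\tfrac{\epsilon}{1+\epsilon}=\tfrac{1}{1+\epsilon}$ and $\tfrac{1}{1+\epsilon}-(1-\epsilon)=\tfrac{\epsilon^2}{1+\epsilon}$. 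Dividing the previous display by $Lr^{1-\epsilon}$ converts it into
\begin{equation*}
\phi(r)\ls 1+C_2\cdot 2^{1-\epsilon}\,r^{\epsilon^2/(1+\epsilon)}\,\phi\bigl(2r^{1/(1+\epsilon)}\bigr)+C_2 r^\epsilon.
\end{equation*}
Choose $R_1\in(0,R_\epsilon)$ small enough (depending only on $\epsilon,N,K,{\rm diam}(\Omega)$) so that $C_2\cdot 2^{1-\epsilon}R_1^{\epsilon^2/(1+\epsilon)}\ls 1/2$. Then for every $r\in(0,R_1]$ we obtain the contractive recursion
\begin{equation*}
\phi(r)\ls 1+\tfrac{1}{2}\phi\bigl(2r^{1/(1+\epsilon)}\bigr)+C_2 r^\epsilon.
\end{equation*}

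Fix $r\in(0,R_1]$, set $r_0:=r$ and $r_{k+1}:=2r_k^{1/(1+\epsilon)}$. Since $0<r_0<1$, the sequence $\{r_k\}$ is strictly increasing with fixed-point limit $2^{(1+\epsilon)/\epsilon}>1$, so there is a first index $n\gs 1$ with $r_n>R_1$ while $r_0,\ldots,r_{n-1}\in(0,R_1]$. Unrolling the recursion and using $r_k^\epsilon\ls 1$ for $k<n$:
\begin{equation*}
\phi(r)\ls\sum_{k=0}^{n-1}2^{-k}(1+C_2 r_k^\epsilon)+2^{-n}\phi(r_n)\ls 2+2C_2+\phi(r_n).
\end{equation*}
For the terminal $\phi(r_n)$, the maximum principle together with $g(z_0)=0$ and the $L$-Lipschitz continuity of $g$ yields $\sup_{\overline\Omega}f-\inf_{\overline\Omega}f\ls L\cdot{\rm diam}(\Omega)$, so $M_{x_0}(r_n)\ls L\cdot({\rm diam}(\Omega)+2)$ (noting $r_n\ls 2R_1^{1/(1+\epsilon)}\ls 2$); combined with $r_n>R_1$ this gives $\phi(r_n)\ls({\rm diam}(\Omega)+2)/R_1^{1-\epsilon}$. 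Therefore $\phi(r)\ls C_\epsilon$ uniformly on $(0,R_1]$, i.e., $M_{x_0}(r)\ls C_\epsilon L r^{1-\epsilon}$, and taking $R'_\epsilon:=R_1$ completes the proof.

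The main technical obstacle is the bookkeeping of constants that ensures $R_1$ and $C_\epsilon$ depend only on $\epsilon,N,K,{\rm diam}(\Omega)$ and not on $r_0$ or $L$. This works out because $R_1$ is chosen purely from $\epsilon$ and the lemma-constant $C_2$ (which itself depends only on the allowed parameters via Lemma \ref{lem-4.2}), while the dependence of the iteration depth $n$ on $r_0$ is absorbed by the geometric factor $2^{-k}$. Termination of the iteration is guaranteed by the strict monotonicity of $\{r_k\}$ together with convergence of the sequence to its fixed point $2^{(1+\epsilon)/\epsilon}$, which lies strictly above $R_1$.
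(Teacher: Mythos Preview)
Your proof is correct and takes a genuinely different route from the paper. Both arguments feed on Lemma~\ref{lem-4.2}, but the paper iterates in the \emph{exponent}: it applies Lemma~\ref{lem-4.2} with a sequence $\epsilon_j=2^{-j}$ and inductively upgrades the H\"older exponent along an auxiliary sequence $a_j\to 1$ defined by $a_1=1/3$, $a_{j+1}=(a_j+2^{-j})/(1+2^{-j})$, obtaining $M_{x_0}(r)\ls C_4^j L r^{a_j}$ at stage $j$. You instead fix the target $\epsilon$ and iterate in the \emph{scale} $r$: the key observation is that dividing \eqref{equ-4.4} by $Lr^{1-\epsilon}$ produces a contraction factor $C_2\cdot 2^{1-\epsilon}r^{\epsilon^2/(1+\epsilon)}$ in front of $\phi(2r^{1/(1+\epsilon)})$, which can be made $\ls 1/2$ for $r$ below a threshold depending only on $\epsilon$ and $C_2$. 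Unrolling along the increasing orbit $r_{k+1}=2r_k^{1/(1+\epsilon)}$ then gives a geometric series plus a single terminal term controlled by the global oscillation of $f$.

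Your approach is more direct: it avoids the bookkeeping of the exponent sequence $\{a_j\}$ and the associated cascade of thresholds $R_{\epsilon_j}$, and it makes transparent why the constant $C_\epsilon$ does not depend on the initial radius (the dependence on the iteration depth $n$ is killed by the factor $2^{-n}$). The paper's approach, on the other hand, has the mild conceptual advantage that each inductive step is a clean application of Lemma~\ref{lem-4.2} without rescaling, and it makes the progressive gain in regularity visible. Either way the dependence of $R'_\epsilon$ and $C_\epsilon$ only on $\epsilon,N,K,{\rm diam}(\Omega)$ is clear.
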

 
 \begin{proof}
 The maximum principle implies 
$$\max_{\overline\Omega}|f|\ls \max_{\overline\Omega} |g |\ls L\cdot{\rm diam}(\Omega),$$
since $g(z_0)=0$ for some point $z_0$.

For each $j\in\mathbb N$, let $\epsilon_j:=2^{-j}$, and  we define a sequence of numbers $\{a_j\}$ by
\begin{equation}\label{equ-4.13}
a_1=\frac{1}{3},\quad a_{j+1}=\frac{a_j}{1+\epsilon_j}+\frac{\epsilon_j}{1+\epsilon_j} =\frac{a_j+2^{-j}}{1+2^{-j}}.
\end{equation}
It is easy to check 
\begin{equation}\label{equ-4.14}
 a_{j+1}\ls 1-\epsilon_j,\ \quad a_{j+1}\gs a_j \quad {\rm and}\quad \lim_{j\to \infty}a_j=1.
 \end{equation}

Since $-f$ is harmonic function with boundary data $-g$, by replacing $f$ with  $-f$,  and noting $\lim_{j\to\infty}a_j\to 1$, it suffices to show that
  for each $k\in\mathbb N$ there exist $R_k\in(0,\frac{1}{2}\min\{R_0,1\})$ such that 
 \begin{equation}\label{equ-add+2}
 M_{x_0}(r)\ls C_4^kLr^{a_k}
 \end{equation}
 for all $r\in (0,R_k)$ and all $x_0\in\partial\Omega$, where the constant $C_4:= 3C_2({\rm diam}(\Omega)+1)$, and  $C_2$ is as in Lemma \ref{lem-4.2}.
 
 Let $\epsilon_1=1/2$. By using Lemma \ref{lem-4.2} and $\max_{\overline\Omega}|f| \ls L\cdot {\rm diam}(\Omega),$ there exists $R_{\epsilon_1} \in(0,\frac 1 2 \min\{1,R_0\})$ such that  
 \begin{equation*}
M_{x_0}(r)\ls \max\big\{C_2 \cdot(L\cdot{\rm diam}(\Omega)) \cdot r^{1/3} +C_2 L r, \ Lr^{1/2}\big\}\ls  C_4 L \cdot r^{1/3}
\end{equation*}
for all  all $x_0\in\partial\Omega$ and $r\in(0,R_{\epsilon_1})$, $r<1$.

 We now argue by induction. Given $j\in\mathbb N$, we assume that  $M_{x_0}(r) \ls C_4^j L\cdot r^{a_j}$ for all  all $x_0\in\partial\Omega$ and $r\in(0,R_{\epsilon_j})$. By using Lemma \ref{lem-4.2} again,  there exists $R_{\epsilon_{j+1}} \in(0,\frac 1 2\min\{1,R_0\})$ such that  
 \begin{equation*}
 \begin{split}
M_{x_0}(r)&\ls \max\big\{C_2  C_4^jL \cdot(2 r^{\frac{1}{1+\epsilon_j}})^{a_j} r^{\frac{\epsilon_j}{1+\epsilon_j}} + C_2 L r, \ Lr^{1-\epsilon_j}\big\}\\
& =\max\big\{C_2  C_4^jL \cdot 2^{a_j} r^{a_{j+1} } + C_2 L r, \ Lr^{1-\epsilon_j}\big\}\qquad \quad({\rm by}\ \ \eqref{equ-4.13})\\
& \ls (2C_2  C_4^j +C_2)L \cdot   r^{a_{j+1} }, \qquad \quad({\rm by}\ \  \eqref{equ-4.14})
\end{split}
\end{equation*}
for all  $r\in(0,R_{\epsilon_{j+1}})$ and all $x_0\in \partial \Omega$. Notice that $2C_2 C_4^j +C_2\ls C_4^{j+1}$. Now the proof of (\ref{equ-add+2}), and hence the proof of the theorem, is finished. 
\end{proof}

\begin{remark}\label{rem-4.4}
The above estimate implies $u\in C^{1-\epsilon}(\overline{\Omega})$. See the next subsection for the general case of harmonic maps.
\end{remark}

 \subsection{Boundary regularity for harmonic maps}
 
 This subsection will deal with the general case that $Y$ is a $CAT(0)$-space. Let $w\in W^{1,2}(\Omega,Y)$ and let $u\in W^{1,2}_w(\Omega,Y)$ be a  harmonic map with boundary data  $w$.
 
We begin with the following global boundedness.

\begin{lemma}\label{lem-4.5}
Suppose that $w\in C(\overline\Omega,Y)$  and denote by
$${\rm osc}_{\overline{\Omega}}w:=\max_{x,y\in\Omega} d_Y\big(w(x),w(y)\big).$$
Then there exists $P\in Y$ such that $d_Y(P,u(x))\ls{\rm osc}_{\overline{\Omega}}w$  for all  $x\in\Omega$.
\end{lemma}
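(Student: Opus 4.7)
The strategy is to replace $u$ by its composition with the $1$-Lipschitz nearest-point projection onto a closed ball in $Y$ containing $w(\overline{\Omega})$, and then conclude via the energy minimality of $u$ together with the uniqueness of the harmonic map in the Dirichlet class $W^{1,2}_w(\Omega,Y)$.

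First, fix any $x_0\in\overline{\Omega}$ and set $P:=w(x_0)$ and $c:={\rm osc}_{\overline{\Omega}}w$. Since $w\in C(\overline{\Omega},Y)$ and $\overline{\Omega}$ is compact, $c<\infty$ and, by the triangle inequality, $w(\overline{\Omega})\subset K:=\overline{B_Y(P,c)}$. Because $Y$ is $CAT(0)$, the ball $K$ is closed and convex, so the nearest-point projection $\pi:Y\to K$ is well defined and $1$-Lipschitz. Put $\tilde u:=\pi\circ u$. The Lipschitz-composition property (3) recorded after the definition of $W^{1,2}(\Omega,Y)$ in Section 3 gives $\tilde u\in W^{1,2}(\Omega,Y)$, and since $\pi$ is $1$-Lipschitz the integrand ${\rm ks}_{2,r}[\tilde u,\Omega]$ is pointwise dominated by ${\rm ks}_{2,r}[u,\Omega]$, whence $e_{\tilde u}\ls e_u$ a.e.\ and therefore $E_2^{\Omega}(\tilde u)\ls E_2^{\Omega}(u)$.

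Next I verify that $\tilde u\in W^{1,2}_{w}(\Omega,Y)$. Since $w(x)\in K$ we have $\pi(w(x))=w(x)$, so pointwise
$$d_Y\big(\tilde u(x),w(x)\big)=d_Y\big(\pi u(x),\pi w(x)\big)\ls d_Y\big(u(x),w(x)\big)\quad \meas\text{-a.e.\ in }\Omega,$$
with $d_Y(u,w)\in W^{1,2}_0(\Omega)$ by hypothesis. By the definition of $W^{1,2}_0(\Omega)$, write $d_Y(u,w)$ as a $W^{1,2}$-limit of nonnegative continuous compactly-supported functions $\psi_n\in W^{1,2}(\Omega)$ (taking positive parts preserves the convergence). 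Using that $\tilde u$ is continuous in $\Omega$ (by Theorem \ref{thm-3.3} and continuity of $\pi$), the truncations $\min\{d_Y(\tilde u,w),\psi_n\}$ are continuous with compact support in $\Omega$, belong to $W^{1,2}(\Omega)$, and converge in $W^{1,2}(\Omega)$ to $\min\{d_Y(\tilde u,w),d_Y(u,w)\}=d_Y(\tilde u,w)$ by the pointwise domination. Hence $d_Y(\tilde u,w)\in W^{1,2}_0(\Omega)$, i.e., $\tilde u\in W^{1,2}_w(\Omega,Y)$.

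Finally, the minimality of $u$ yields $E_2^{\Omega}(u)\ls E_2^{\Omega}(\tilde u)$; combined with the inequality from the first paragraph this forces equality, and the uniqueness of the harmonic map in $W^{1,2}_w(\Omega,Y)$ (cf.\ Theorem 6.4 of \cite{GT21b}) implies $\tilde u=u$ $\meas$-a.e. Consequently $u(x)\in K$ a.e., i.e., $d_Y(P,u(x))\ls c={\rm osc}_{\overline{\Omega}}w$, and the local Lipschitz continuity of $u$ on $\Omega$ from Theorem \ref{thm-3.3} promotes this bound to every $x\in\Omega$. The only genuinely technical point is the verification in the second paragraph that a nonnegative $W^{1,2}$ function pointwise dominated by an element of $W^{1,2}_0(\Omega)$ again lies in $W^{1,2}_0(\Omega)$; this is handled routinely once one uses the interior continuity of $\tilde u$ to make the truncations continuous.
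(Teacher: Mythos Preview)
Your argument is correct, but it takes a rather different route from the paper's own proof. The paper observes directly that for any $P\in w(\Omega)$ the function $d_Y(u(\cdot),P)$ is subharmonic on $\Omega$ (Theorem~\ref{thm-3.3}(2)), and that $[d_Y(u(\cdot),P)-{\rm osc}_{\overline\Omega}w]^+\in W^{1,2}_0(\Omega)$ because $d_Y(u,w)\in W^{1,2}_0(\Omega)$ and $d_Y(w(\cdot),P)\ls {\rm osc}_{\overline\Omega}w$; the weak maximum principle then gives the bound a.e., and interior continuity upgrades it to every point. This is shorter and uses only tools already assembled in the paper (subharmonicity and the maximum principle), whereas your approach invokes additional structure: convexity of closed balls in $CAT(0)$, the $1$-Lipschitz nearest-point projection, the energy inequality for Lipschitz post-compositions, and the \emph{uniqueness} of the energy minimizer in $W^{1,2}_w(\Omega,Y)$. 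What your argument buys is a more geometric statement that generalizes immediately: the same reasoning shows $u(\Omega)$ is contained in \emph{any} closed convex subset of $Y$ containing $w(\overline\Omega)$, not just a ball.

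One small comment on the only technical step you flag: the claim that $\min\{d_Y(\tilde u,w),\psi_n\}$ converges to $d_Y(\tilde u,w)$ \emph{strongly} in $W^{1,2}(\Omega)$ is not automatic from $\psi_n\to d_Y(u,w)$ in $W^{1,2}$, since $\min$ is not norm-continuous on $W^{1,2}$ in general. What you actually need is the order-ideal property ``$0\ls f\ls g$, $f\in W^{1,2}(\Omega)$, $g\in W^{1,2}_0(\Omega)$ $\Rightarrow$ $f\in W^{1,2}_0(\Omega)$''; this is standard in the Cheeger/RCD Sobolev setting (e.g.\ via a weak-convergence plus Mazur-type argument, or by the known characterisation of $W^{1,2}_0$ through quasi-continuous representatives), but it is worth stating it as the lemma you are invoking rather than asserting the strong convergence of the truncations.
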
 
\begin{proof}
Notice  that  
 $d_Y(u(x),w(x))\in W^{1,2}_0(\Omega)$  and the triangle inequality imply   $$[d_Y(u(x),P)-d_Y(w(x),P)]^+\in W^{1,2}_0(\Omega)$$ for any $P\in Y$.
Fixing arbitrarily $P \in w(\Omega)$, we have $d_Y(P,w(x))\ls {\rm osc}_{\overline{\Omega}}w,$
and hence 
$$ [d_Y(u(x),P)-{\rm osc}_{\overline{\Omega}}w]^+ \in W^{1,2}_0(\Omega).$$
   Now, by the maximum principle and the sub-harmonicity of $d_Y(u(x),P)$ (see Theorem \ref{thm-3.3}(2)), we get
   $$d_Y(u(x),P)\ls  {\rm osc}_{\overline{\Omega}}w,\quad {\rm a.e.\ in }\ \ \Omega.$$ 
 From Theorem \ref{thm-3.3}(1), we know that $u\in C(\Omega)$. Therefore, we have 
 $d_Y(u(x),P)\ls  {\rm osc}_{\overline{\Omega}}w$ for all $x\in \Omega.$
  \end{proof} 

We now extend the boundary estimate in Theorem \ref{thm-4.3} from harmonic functions to harmonic maps.
 
 \begin{theorem} \label{thm-4.6}
Suppose that $\Omega\subset X$ satisfies a uniformly exterior condition with constant $R_0$ and $w\in Lip(\overline{\Omega},Y)$. Then for any $\epsilon\in(0,1)$ it holds 
\begin{equation}\label{equ-4.15}
 d_Y\big(u(x),w(x_0)\big)\ls C_{\epsilon}   L_w   d^{1-\epsilon}(x,x_0) 
 \end{equation}
 for all  $x_0\in\partial\Omega$ and $x\in\Omega$ with $d(x,x_0)<R'_\epsilon$, where $R'_\epsilon$ and $C_\epsilon$ are as in Theorem \ref{thm-4.3}, and  
  $$L_w:= \sup_{x,y\in\overline\Omega} \frac{d_Y\big(w(x),w(y)\big)}{d(x,y)}.$$ 
  In particular, $u$ is continuous at $x_0$ and $u(x_0)=w(x_0)$.
 \end{theorem}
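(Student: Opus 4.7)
The plan is to reduce the boundary estimate for the harmonic map $u$ to the boundary estimate of Theorem~\ref{thm-4.3} for an auxiliary scalar harmonic function, using the subharmonicity of $d_Y(u(\cdot),P)$ established in Theorem~\ref{thm-3.3}(2).

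I would fix $x_0\in\partial\Omega$ and set $P:=w(x_0)\in Y$, and put
\[ v(x):=d_Y\big(u(x),P\big),\qquad g(x):=d_Y\big(w(x),P\big). \]
Then $v\in W^{1,2}(\Omega)\cap C(\Omega)$ with ${\bf\Delta}v\gs 0$ in the sense of distributions, while $g\in Lip(\overline{\Omega})$ has Lipschitz constant at most $L_w$ and satisfies $g(x_0)=0$. I would then let $f\in W^{1,2}(\Omega)$ solve the Dirichlet problem ${\bf\Delta}f=0$ on $\Omega$ with $f-g\in W^{1,2}_0(\Omega)$; applying Theorem~\ref{thm-4.3} to $f$ with boundary datum $g$ yields
\[ \sup_{B_r(x_0)\cap\Omega}|f(x)-f(x_0)|\ls C_\epsilon L_w\, r^{1-\epsilon} \qquad \forall\, r\in(0,R'_\epsilon), \]
and the continuous extension of $f$ to $x_0$ (Lemma~\ref{lem-2.4}) satisfies $f(x_0)=g(x_0)=0$.

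The heart of the argument is then the comparison $v\ls f$ on $\Omega$. Following the idea already used in Lemma~\ref{lem-4.5}, the hypothesis $u\in W^{1,2}_w(\Omega,Y)$ combined with the reverse triangle inequality $|v-g|\ls d_Y\big(u(\cdot),w(\cdot)\big)$ would give $[v-g]^+\in W^{1,2}_0(\Omega)$. Since $|g-f|\in W^{1,2}_0(\Omega)$ as well, the pointwise bound $[v-f]^+\ls [v-g]^++|g-f|$ would yield $[v-f]^+\in W^{1,2}_0(\Omega)$ (using that any nonnegative $W^{1,2}$-function dominated by a $W^{1,2}_0$-function is itself in $W^{1,2}_0$). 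Because ${\bf\Delta}(v-f)={\bf\Delta}v\gs 0$ as measures, the maximum principle of Section~2 then forces $v\ls f$ a.e. in $\Omega$, and hence pointwise by the continuity of $v$ and $f$ on $\Omega$.

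Combining these two ingredients gives $0\ls v(x)\ls C_\epsilon L_w\, d(x,x_0)^{1-\epsilon}$ for $d(x,x_0)<R'_\epsilon$, which is precisely \eqref{equ-4.15}. Letting $d(x,x_0)\to 0$ shows $u(x)\to w(x_0)$, so $u$ extends continuously to $x_0$ with $u(x_0)=w(x_0)$. The main obstacle I expect is the careful verification of the boundary membership $[v-f]^+\in W^{1,2}_0(\Omega)$ via the triangle-inequality trick of Lemma~\ref{lem-4.5}; once that reduction is in hand, Theorem~\ref{thm-4.3} supplies the quantitative decay directly.
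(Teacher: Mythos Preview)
Your proposal is correct and follows essentially the same approach as the paper's proof: fix $P=w(x_0)$, compare the subharmonic function $d_Y(u(\cdot),P)$ with the harmonic function $f$ having boundary data $g=d_Y(w(\cdot),P)$ via the triangle-inequality trick to get $[v-f]^+\in W^{1,2}_0(\Omega)$, then invoke the maximum principle and Theorem~\ref{thm-4.3}. The only cosmetic difference is that the paper writes the pointwise bound as $[v-f]^+\ls [v-g]^++[g-f]^+$ rather than with $|g-f|$, but since $f-g\in W^{1,2}_0(\Omega)$ both versions work identically.
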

 \begin{proof}
 Fix any   point $x_0\in \partial\Omega$. We solve the Dirichlet problem 
 $${\bf \Delta} f(x) =0\quad {\rm on}\ \Omega\qquad {\rm and}\qquad f(x)-d_Y(w(x_0),w(x))\in W^{1,2}_0(\Omega).$$
 Notice that, by the triangle inequality, the function $g_{x_0}(x) :=d_Y(w(x_0),w(x))$ is Lipschitz continuous on $\overline\Omega$ with a Lipschitz constant 
 $$L_{g_{x_0}}  \ls  L_w\quad {\rm and}\quad g_{x_0}(x_0)=0.$$
 According to Theorem  \ref{thm-4.3}, we have 
\begin{equation} \label{equ-4.16} 
 \sup_{B_r(x_0)\cap\Omega} |f(x)-f(x_0)|\ls C_{\epsilon}  L_w  r^{1-\epsilon}, 
 \end{equation}
   for any ball $B_r(x_0)$ with  $x_0\in\partial \Omega$  and $r\in (0,R'_\epsilon)$.

By using Theorem \ref{thm-3.3}(2), we know that $d_Y(u(x),w(x_0))-f(x)$ is subharmonic on $\Omega$. Now we consider its boundary value on $\partial \Omega$.
 From 
 $d_Y(u(x),w(x))\in W^{1,2}_0(\Omega)$  and the triangle inequality, we have   $$[d_Y(u(x),w(x_0))-d_Y(w(x),w(x_0))]^+\in W^{1,2}_0(\Omega).$$
 Hence, by $ [d_Y(u(x),w(x_0))-f(x)]^+\ls  [d_Y(u(x),w(x_0))-d_Y(w(x),w(x_0))]^+ + [d_Y(w(x),w(x_0))-f(x)]^+ $, we obtain 
  $$[d_Y(u(x),w(x_0))-f(x)]^+\in W^{1,2}_0(\Omega).$$
   Now the maximum principle implies  that 
   $$d_Y(u(x),w(x_0))\ls f(x),\quad {\rm a.e.\ in }\ \ \Omega.$$ 
  Notice that $u\in C(\Omega)$ (by Theorem \ref{thm-3.3}) and $f\in C(\Omega)$, we get   
   $$d_Y(u(x),w(x_0))\ls f(x),\quad  \forall x\in  \Omega.$$  
By combining this and  (\ref{equ-4.16})   we have
 $$d_Y(u(x),w(x_0))\ls f(x)\ls  C_{\epsilon} L_w r^{1-\epsilon},  \qquad \forall\ x\in B_r(x_0)\cap \Omega$$
   for any ball $B_r(x_0)$ with  $x_0\in\partial \Omega$  and $r\in (0,R'_\epsilon)$.  Hence we have finished the proof.
   \end{proof}

When the boundary data $w$ is H\"older continuous on $\overline{\Omega}$, the same proof of Theorem \ref{thm-4.6} implies the following result.

 \begin{corollary}\label{cor-4.7}
Suppose that $\Omega\subset X$ satisfies an exterior density condition with constant $R_0$, and   $w\in C^\alpha(\overline{\Omega},Y)$ for some $\alpha\in(0,1)$.
  Then,  there exist two  constants $\beta\in(0,\alpha)$ and   $r_0\in (0,R_0)$  such that 
\begin{equation*} 
 d_Y\big(u(x),w(x_0)\big)\ls  C d^{\beta}(x,x_0) 
 \end{equation*}
 for all  $x_0\in\partial\Omega$ and $x\in\Omega$ with $d(x,x_0)<r_0$,  
   where $R_0$ is the number given in the definition of exterior density condition, and $C$ depends only on $N, K, {\rm diam}(\Omega),$ and the modulus of a H\"older continuity of $w$.
\end{corollary}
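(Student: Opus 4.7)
The plan is to adapt the proof of Theorem \ref{thm-4.6}, substituting Bj\"orn's H\"older boundary estimate (Lemma \ref{lem-2.4}) for the Lipschitz estimate provided by Theorem \ref{thm-4.3}. The overall strategy remains the same: build a harmonic comparison function $f$ whose boundary data captures the oscillation of $w$ around $x_0$, use the subharmonicity of $d_Y(u(\cdot),w(x_0))$ together with the maximum principle to dominate it by $f$, and finally read off the boundary H\"older estimate from $f$.

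Concretely, I would first fix $x_0\in\partial\Omega$ and define the auxiliary boundary datum $g_{x_0}(x):=d_Y(w(x_0),w(x))$. Two easy checks are in order: (i) by the triangle inequality, $g_{x_0}\in C^\alpha(\overline\Omega)$ with H\"older modulus bounded by that of $w$, and $g_{x_0}(x_0)=0$; (ii) by property (3) of the Sobolev space $W^{1,2}(\Omega,Y)$ recalled in Section 3, $g_{x_0}\in W^{1,2}(\Omega)$. I would then solve the Dirichlet problem ${\bf\Delta}f=0$ on $\Omega$ with $f-g_{x_0}\in W^{1,2}_0(\Omega)$, and invoke Lemma \ref{lem-2.4} to obtain constants $\beta\in(0,\alpha)$ and $r_0\in(0,R_0)$ together with a continuous representative of $f$ at $x_0$ satisfying $f(x_0)=g_{x_0}(x_0)=0$ and
\begin{equation*}
\sup_{B_r(x_0)\cap\Omega}|f(x)|\ls Cr^\beta \qquad \forall\, r\in(0,r_0),
\end{equation*}
the constant $C$ depending only on $N,K,{\rm diam}(\Omega)$ and the H\"older modulus of $w$.

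The remaining step copies the comparison argument of Theorem \ref{thm-4.6} verbatim. Theorem \ref{thm-3.3}(2) gives that $x\mapsto d_Y(u(x),w(x_0))$ is subharmonic in $\Omega$, so $d_Y(u(\cdot),w(x_0))-f$ is subharmonic as well. Combining $d_Y(u(x),w(x))\in W^{1,2}_0(\Omega)$ with two applications of the triangle inequality yields $[d_Y(u(\cdot),w(x_0))-g_{x_0}]^+\in W^{1,2}_0(\Omega)$, while $f-g_{x_0}\in W^{1,2}_0(\Omega)$ gives $[g_{x_0}-f]^+\in W^{1,2}_0(\Omega)$; summing, $[d_Y(u(\cdot),w(x_0))-f]^+\in W^{1,2}_0(\Omega)$. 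The maximum principle then forces $d_Y(u(x),w(x_0))\ls f(x)$ almost everywhere, and hence everywhere on $\Omega$ by the interior continuity from Theorem \ref{thm-3.3}(1) and Lemma \ref{lem-2.4}. Plugging in the estimate on $f$ concludes the argument.

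The only genuine obstacle I anticipate is the bookkeeping check that $g_{x_0}$ satisfies the hypotheses of Lemma \ref{lem-2.4} with a H\"older modulus controlled solely by that of $w$, so that the final constant $C$ has the dependence claimed in the statement. Everything else is a direct transcription of the argument for Theorem \ref{thm-4.6}, and I foresee no new analytic difficulty.
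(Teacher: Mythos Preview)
Your proposal is correct and follows exactly the approach taken in the paper: the paper's proof is a one-line reference to the argument of Theorem \ref{thm-4.6} with Theorem \ref{thm-4.3} replaced by Lemma \ref{lem-2.4}, together with the observation that the H\"older modulus of $g_{x_0}$ is dominated by that of $w$ uniformly in $x_0$. You have simply spelled out the details of this substitution, including the check that $g_{x_0}\in W^{1,2}(\Omega)$ needed to invoke Lemma \ref{lem-2.4}.
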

 
 \begin{proof}
 The same argument is as in the proof of Theorem \ref{thm-4.6}, by replacing Theorem \ref{thm-4.3} with Lemma \ref{lem-2.4}. Notice also that a modulus of the H\"older continuity of $w$ dominates the modulus of the H\"older continuity of $g_{x_0}$ for all $x_0\in\partial\Omega.$
  \end{proof}

Now we are ready to prove the main result.
\begin{proof}[Proof of Theorem \ref{thm-1.5}]
We first prove (B). From Lemma \ref{lem-4.5}, we have $d_Y\big(u(x),P\big)\ls  {\rm osc}_{\overline\Omega}w:=\bar M$ for all $x\in\Omega$.

 For any two point $x,y\in\overline\Omega$ with $d(x,y):=r$.  We can assume 
 $$r<R'_{\epsilon}/20: =R_1,$$
  where $R'_\epsilon$ is as in Theorem \ref{thm-4.6}. Suppose not, from Lemma \ref{lem-4.5}, we get $ d_Y(u(x),u(y))\ls 2 \bar M/R_1\cdot r.$
 
 We split the following argument into three cases: (i) $d(x,\partial \Omega)>4R_1$, (ii) $4r<d(x,\partial \Omega)<5R_1$,  and (iii) $ d(x,\partial \Omega)<5r.$
 
 (i)  $d(x,\partial \Omega)>4R_1$.  It follows   $B_{2R_1}(x)\subset \Omega$, by using Theorem \ref{thm-3.3}(1) to the ball $B_{R_1}(x)$, we have (notice that $y\in B_{R_1}(x)$) 
\begin{equation}\label{equ-4.19}
d_Y\big(u(x),u(y)\big)\ls\frac{C_{N,K,{\rm diam}(\Omega)}}{R_1}\bar M\cdot d(x,y).
\end{equation}
 
  (ii)  $ 4r<d(x,\partial \Omega)<5R_1$. Let $x_0\in\partial \Omega$ such that $d(x,x_0)=d(x,\partial\Omega):=\bar d$. It follows  $B_{\bar d/2}(x)\subset \Omega$, by using Theorem \ref{thm-3.3}(1) to the ball $B_{\bar d/4}(x)$ and Theorem \ref{thm-4.6}, we have
\begin{equation}\label{equ-4.20}
\begin{split}
d_Y\big(u(x),u(y)\big)& \ls\frac{C_{N,K,{\rm diam}(\Omega)}}{\bar d/4} \sup_{B_{\bar d/2}(x)}d_Y((u(x),u(x_0)\big)\cdot d(x,y)\\
&\ls C_{\epsilon,w}  \frac{r^\epsilon }{\bar d^{\epsilon}}\cdot r^{1-\epsilon} \ls C_{\epsilon,w}r^{1-\epsilon}, \qquad ({\rm since}\ \  r\ls \bar d,)
\end{split}
\end{equation}
where $C_{\epsilon,w}$ depends only on $\epsilon,L_w,N,K$ and ${\rm diam}(\Omega)$.

   (iii)  $  d(x,\partial \Omega)<5r$. Let $x_0,y_0$ be two points in $\partial \Omega$ such that $d(x,x_0)=d(x,\partial\Omega)$ and $d(y,y_0)=d(y,\partial\Omega)$.   By using  Theorem \ref{thm-4.6} to both $x_0$ and $y_0$, we have
\begin{equation}\label{equ-4.21}
\begin{split}
d_Y\big(u(x),u(y)\big)&  \ls d_Y\big(u(x),u(x_0)\big)+d_Y\big(u(y),u(y_0)\big)+d_Y\big(u(x_0),u(y_0)\big)\\
&\ls C_{\epsilon,w}\big(d^{1-\epsilon}(x,x_0)+d^{1-\epsilon}(y,y_0)\big)+L_wd(x_0,y_0)\\
&\ls C_{\epsilon,w}(2\cdot (5r)^{1-\epsilon})+L_w\big(d(x,x_0)+d(x,y)+d(y,y_0)\big)\\
&\ls  10C_{\epsilon,w} \cdot r^{1-\epsilon}+11L_w\cdot r,
\end{split}
\end{equation}
where $C_{\epsilon,w}$ depends only on $\epsilon,L_w,N,K$ and ${\rm diam}(\Omega)$.
 
Combining (\ref{equ-4.19})-(\ref{equ-4.21}) in the above three cases, we obtain 
 \begin{equation}\label{equ4.22}
\begin{split}
d_Y\big(u(x),u(y)\big)\ls C\cdot r^{1-\epsilon}, 
\end{split}
\end{equation}
for all $x,y$ with $d(x,y)\ls R_\epsilon'/20$,  where $C_{\epsilon,w}$ depends only on $\epsilon,{\rm osc}_{\overline{\Omega}}w,L_w, R_\epsilon',N,K$ and ${\rm diam}(\Omega)$. This finishes the proof of (B).
  
  The proof of (A) is the same as the above, by replacing Theorem  \ref{thm-4.6} with Corollary \ref{cor-4.7}. The proof is completed.
  \end{proof}


\begin{thebibliography}{99}
 
\bibitem{Amb18} L.  Ambrosio, \emph{Calculus, heat flow and curvature-dimension bounds in metric measure spaces}.
Proceedings of the ICM--Rio de Janeiro 2018. Vol. I. Plenary lectures, 301--340, World Sci. Publ., Hackensack, NJ, 2018.


\bibitem{AGS14a} L. Ambrosio, N. Gigli, G. Savar\'e, \emph{Calculus and heat flow in metric measure spaces and applications to spaces with Ricci bounds from below}, Invent. Math. 195 (2014), no. 2, 289--391.

 \bibitem{AGS14b} L. Ambrosio, N. Gigli, G. Savar\'e, \emph{Metric measure spaces with Riemannian Ricci curvature bounded from below}, Duke Math. J., 163(7) (2014), 1405--1490.

\bibitem{AMS16} L. Ambrosio, A. Mondino, G. Savar\'e, \emph{ On the Bakry-Emery condition, the gradient estimates
and the local-to-global property of $RCD^*(K, N)$ metric measure spaces}, J. Geom. Anal. 26 (2016), no. 1, 24–56.

\bibitem{Bjo02} J. Bj\"orn, \emph{Boundary continuity for quasiminimizers on metric spaces}, Ill. J. Math., 46(2). 383--403, (2002).

 \bibitem{BBI01} D.Burago,  Y.Burago \& S.Ivanov, \emph{A Course in Metric Geometry}, Graduate Studies in Mathematics, vol. 33, AMS (2001).

\bibitem{CM21} F. Cavalletti, E. Milman, \emph{The globalization theorem for the curvature-dimension condition},
Invent. Math. 226 (2021), no. 1, 1--137.

 \bibitem{Che99} J. Cheeger, \emph{Differentiability of Lipschitz functions on metric measure spaces}, Geom. Funct. Anal. 9 (1999), 428--517.

\bibitem{Che95} J. Chen, \emph{On energy minimizing mappings between and into singular spaces}, Duke Math. J. 79 (1995), 77--99.

  
 \bibitem{DM08} G. Daskalopoulos, C. Mese, \emph{Harmonic maps from a simplicial complex
 and geometric rigidity}, J. Differential Geom. 78 (2008), 269--293.
 
  \bibitem{DM10} G. Daskalopoulos, C. Mese, \emph{Harmonic maps between singular spaces I}, Comm. Anal. Geom. 18 (2010), 257--337.
  
\bibitem{EF01} J. Eells, B. Fuglede, \emph{Harmonic maps between Riemannian polyhedra}, Cambridge Tracts Maths., 142, Cambridge University Press, Cambridge
(2001).

\bibitem{EKS15} M.  Erbar,  K. Kuwada and  K.-T. Sturm, \emph{ On the equivalence of the entropic curvature-dimension
condition and Bochner's inequality on metric measure spaces}. Invent. Math. 201 (2015), no. 3, 993--1071.
 
 
\bibitem{Fug03} B. Fuglede, \emph{H\"older continuity of harmonic maps from Riemannian polyhedra to spaces of upper bounded curvature}, Calc. Var. PDE 16(4) (2003), 375--403.

\bibitem{Fug08} B. Fuglede, \emph{Harmonic maps from Riemannian polyhedra to geodesic spaces with curvature bounded from above}, Calc. Var. PDE 31(1) (2008), 99--136.

\bibitem{Gig13} N. Gigli, \emph{The splitting theorem in non-smooth context},  Preprint, arXiv:1302.5555, 2013.

\bibitem{Gig15} N. Gigli, \emph{On the differential structure of metric measure spaces and applications}, Mem. Amer. Math. Soc., 236
(2015), pp. vi+91.

 \bibitem{Gig23} N. Gigli, \emph{On the regularity of harmonic maps from $RCD(K, N)$ to $CAT(0)$ spaces and related
results}.  Ars Inveniendi Analytica, (2023). https://doi.org/10.15781/sf2c-1y90.

\bibitem{Gig23b} N. Gigli, \emph{De Giorgi and Gromov working together}. Preprint, arXiv:2306.14604.


\bibitem{GT21b} N. Gigili, A. Tyulenev, \emph{Korevaar-Schoen's energy on strongly rectifiable spaces}, Calc. Var. Partial Differential Equations, 60
(2021), pp. Paper No. 235.

\bibitem{GT01} D. Gilbarg, N. S. Trudinger, \emph{Elliptic partial differential equations of second order}, Classics in Mathematics,  Springer-Verlag, Berlin, 2001. Reprint of the 1998 edition.

\bibitem{GS92} M. Gromov, R. Schoen, \emph{Harmonic maps into singular spaces and $p$-adic superrigidity for lattices in groups of rank one}, Publ. Math. IHES 76 (1992), 165--246.
 
\bibitem{Guo21} C.-Y. Guo, \emph{Harmonic mappings between singular metric spaces},    Ann. Global Anal. Geom. 60
(2021), no. 2, 355--399. 

\bibitem{HKST01} J. Heinonen, P. Koskela, N. Shanmugalingam \& J. Tyson, \emph{Sobolev classes of Banach space-valued functions and quasiconformal mappings}, J. Anal. Math.  85  (2001), 87--139.
 

\bibitem{HZ17} J.-C. Huang, H.-C. Zhang, \emph{Harmonic maps between Alexandrov spaces}, J. Geom. Anal., 27, 1355--1392, (2017).

\bibitem{HZ20} J.-C. Huang, H.-C. Zhang, \emph{Localized elliptic gradient estimate for solutions of the heat equation on $RCD^*(K,N)$ metric measure spaces}, manu. math., 161, 303-324, (2021).


 
\bibitem{JLZ16} R. Jiang, H. Li, and H.-C. Zhang, \emph{Heat kernel bounds on metric measure space and some applications}, Potential Anal. 44, 601--627 (2016).

\bibitem{Jost94} J. Jost, \emph{Equilibrium maps between metric spaces}, Calc. Car. PDE 2 (1994), 173--204.

\bibitem{Jost95} J. Jost, \emph{Convex functionals and generalized harmonic maps into spaces of nonpositive curvature}, Comment. Math. Helv. 70, 4 (1995), 659--673.

\bibitem{Jost96} J. Jost, \emph{Generalized harmonic maps between metric spaces}, Geometric Analysis and the Calculus of Variations for Stefan Hildebrandt, J. Jost (editor), 143--174, Intern. Press,
(1996).

\bibitem{Jost97} J. Jost,  \emph{Generalized Dirichlet forms and harmonic maps}, Calc. Var. PDE 5, (1997), 1--19.

\bibitem{Jost98} J. Jost, \emph{Nonlinear Dirichlet forms}, New directions in Dirichlet forms, J. Jost and W. Kendall and U. Mosco and M. R\"ockner and K.Th. Sturm (editor), 1--47, International Press/AMS, (1998).

\bibitem{Ken94} C. E. Kenig, \emph{harmonic analysis techniques for second order elliptic boundary value problems}, CBMS, 83, Amer. Math. Soc. 1994.

\bibitem{KS93} N. Korevaar, R. Schoen, \emph{Sobolev spaces and harmonic maps for metric space targets}, Comm. Anal. Geom. 1 (1993), 561--659.
 
\bibitem{KS03} K. Kuwae, T. Shioya, \emph{Sobolev and Dirichlet spaces over maps between metric spaces}, J. Reine Angew. Math., 555, 39--75, (2003).

\bibitem{Lin97} F. H. Lin, \emph{Analysis on singular spaces}, Collection of papers on geometry, analysis and mathematical physics, 114--126, World Sci. Publ., River Edge, NJ, (1997).
 
 \bibitem{LV09} J. Lott, C. Villani, \emph{Ricci curvature for metric-measure spaces via optimal transport}, Ann of Math., 169 (2009),903--991.
\bibitem{mese02} C. Mese, \emph{Harmonic maps into spaces with an upper curvature bound in the sense of Alexandrov}, Math. Z., 242 (2002), 633--661.

\bibitem{MS23} A. Mondino, D.  Semola, \emph{Lipschitz continuity and Bochner-Eells-Sampson inequality
for harmonic maps from $RCD(K,N)$ spaces to $CAT(0)$  spaces}, to appear in American Journal of Mathematics, arXiv:2202.01590v2 (2022).

\bibitem{Ohta04} S.-I. Ohta,  \emph{ Cheeger type Sobolev spaces for metric space targets}. Potential Anal.  20  (2004),  no. 2, 149--175. 
 
\bibitem{SU83} R. Schoen, K. Uhlenbeck, \emph{Boundary regularity and the Dirichlet problem for harmonic maps}. J. Differ. Geom. 18, 2 (1983), 253--268.
 
\bibitem{Ser94} T. Serbinowski, \emph{Boundary regularity of harmonic maps to nonpositively curved metric spaces}, Comm. Anal. Geom., 2 (1), 139--153 (1994).
 
\bibitem{Stu95} K-T. Sturm, \emph{Analysis on local Dirichlet spaces. II. Upper Gaussian estimates for the fundamental solutions of parabolic equations}, Osaka J. Math., 32 (1995), pp. 275--312. 

\bibitem{Stu05} K-T. Sturm, \emph{A semigroup approach to harmonic maps}, Potent. Anal. 23 (2005) 225--277.

 \bibitem{Stu06a} K-T. Sturm, \emph{On the geometry of metric measure spaces. I}, Acta Math., 196(1)  (2006) 65--131.

 \bibitem{Stu06b} K-T. Sturm, \emph{On the geometry of metric measure spaces. I}, Acta Math., 196(1)  (2006) 133--177.
 
\bibitem{ZZ18} H. C. Zhang, X. P. Zhu, \emph{Lipschitz contunuity of harmonic maps between Alexandrov spaces}, Invent. Math. 211 (2018), no. 3, 863--934.

\bibitem{ZZZ19}  H.-C. Zhang, X. Zhong and  X.-P. Zhu, \emph{Quantitative gradient estimates for harmonic maps into
singular spaces},  Sci. China Math. 62 (2019), no. 11, 2371--2400.

\end{thebibliography}
\end{document}